\newtheorem{theorem}{Theorem}
\newtheorem{proposition}{Proposition}
\newtheorem{lemma}{Lemma}
\newtheorem{corollary}{Corollary}
\theoremstyle{definition}
\newtheorem{definition}{Definition}
\newtheorem{example}{Example}
\newtheorem{remark}{Remark}
\begin{document}

\title{The Unique Solvability Conditions for the Generalized Absolute Value Equations}

\author{
Shubham Kumar\footnote{
Department of Mathematics, PDPM-Indian Institute of Information Technology, Design and Manufacturing Jabalpur, M.P. India, e-mail: \texttt{shub.srma@gmail.com}}
\and Deepmala\footnote{
Department of Mathematics, Faculty of Natural Sciences, PDPM-Indian Institute of Information Technology, Design and Manufacturing Jabalpur, M.P. India, e-mail: \texttt{dmrai23@gmail.com}}
}

\date{\today}
\maketitle

\begin{abstract}
	This paper investigates the conditions that guarantee unique solvability and unsolvability for the generalized absolute value equations (GAVE) given by $Ax - B \vert x \vert = b$. Further, these conditions are also valid to determine the unique solution of the generalized absolute value matrix equations (GAVME)  $AX - B \vert X  \vert =F$. Finally, certain aspects related to the solvability and unsolvability of the absolute value equations (AVE) have been deliberated upon.
\end{abstract}

\textbf{Keywords: }\textit {Generalized absolute value equation; Absolute value matrix equations; Unique solution; Sufficient condition; Unsolvability}
\bigskip

\section{Introduction}
We are considering the following GAVE
\begin{equation} \label{Equ1}
	Ax -B\vert x \vert =b,
\end{equation}
with $A,B \in R^{n\times n},$ $b \in R^{n}$ are given and $x \in \mathbb{R}^{n}$ is to determined.

The following AVE is a standard form of the GAVE (\ref{Equ1}), defined as
\begin{equation} \label{Equ2}
	Ax -\vert x \vert =b.
\end{equation}
For a matrix $A \in R^{n \times n}$ and a vector $x \in R^{n }$, $\vert A \vert$ and $\vert x \vert$ denotes the component-wise absolute value of the matrix and the vector, respectively.
The exploration of AVE draws inspiration from the concept of interval linear equations \cite{rohn2004theorem}. Recently, the GAVE gets the more attention due to its wide applications in the different fields of mathematics and engineering applications, for example, bimatrix games, linear complementarity problems, linear programming etc. Rohn \cite{rohn2004theorem} first presented the alternative theorem for the unique solvability of the GAVE. Further, many authors investigated the unique solvability conditions of the GAVE in \cite{mangasarian2007absolute,mangasarian2006absolute,prokopyev2009equivalent,rohn2009unique,rohn2014iterative,wu2021unique,wu2016unique,wu2018unique,wu2020note}. Certain conditions for the unique solvability of the GAVE (\ref{Equ1}) are derived from the analysis of the interval matrix, singular values, spectral radius, and norms of the matrices A and B involved in the equation. But calculating the norm, spectral radius, and singular value of the sparse and large matrix may be computational time so high. Some conditions have limited practical usage or apply to tough, practical problems. For example, GAVE has a unique solution if $\rho (A^{-1} B\bar{\Lambda}) <1,$ or $\rho ( B\bar{\Lambda}A^{-1}) <1,$ satisfy for each $\bar{\Lambda} \in [-I_{n}, I_{n}]$. To apply such a condition, we have to know all $\bar{\Lambda} \in [-I_{n}, I_{n}]$, which is not an easy task. The regularity of the interval matrix $[A - |B|, A + |B|]$ implies the unique solvability of the GAVE, but determining the regularity of interval matrices is recognized as an NP-hard problem \cite{poljak1993checking}.

In this article, based on the different matrix classes, we provide some new unique solvability conditions for GAVE. The unique solvability conditions of GAVE (\ref{Equ1}) are also helpful for obtaining the unique solvability conditions for the LCP and the GAVME \cite{dehghan2020matrix,kumar2023note,kumar2022note,li2016note}.
Before designing an algorithm to solve a GAVE system, it is crucial to ensure the existence of a unique solution. Ensuring the existence of a unique solution guarantees that the algorithm will produce meaningful and accurate results. Therefore, it is essential to establish the solvability conditions for GAVE as a prerequisite for the algorithm design. In the existing literature, we introduce additional conditions that contribute to establishing the unique solvability of the GAVE.

\paragraph{Notation.}
We use $\sigma_{max}(A)$, $\sigma_{min}(A)$ and $\rho(A)$ for maximum singular value, minimum singular value and spectral radius of given matrix A, respectively. $|| . ||$ denotes the matrix norm. $A^{T}$ denotes the transpose of A and $I_{n}$ denotes identity matrix of size $n \times n.$ Given $x \in R^{n},$ the notation $sign(x)$ represents a vector whose elements are assigned values of 1, 0, or -1 based on whether the corresponding element of x is positive, zero, or negative. The diagonal matrix $D' = diag(sign(x))$ represents a diagonal matrix associated with the vector $sign(x)$.

\paragraph{Definitions.}
For $A,~ \Delta \in R^{n \times n}$, $\Delta \geq 0$, the interval matrix $\mathbb{I_{A}} = [A - \Delta, A + \Delta]$ is regular if each $K \in \mathbb{I_{A}}$ is invertible, otherwise $\mathbb{I_{A}}$ is singular. If  $A,~ \Delta$ are symmetric, then $\mathbb{I_{A}}$ is known as a symmetric interval matrix. A symmetric interval matrix is positive definite if each symmetric matrix $K \in \mathbb{I_{A}}$ is positive definite.

A matrix $A \in R^{n \times n}$ is categorized as a nonsingular M-matrix if and only if a matrix $\Delta \geq 0$ exists such that $A = \gamma I_{n} - \Delta$ where $\gamma > \rho(\Delta).$ $A \in R^{n \times n}$ is an H-matrix if and only if its comparison matrix is a nonsingular M-matrix. The signature of a symmetric matrix comprises a triplet representing the counts of its positive, negative, and zero eigenvalues.

\section{Generalized Absolute Value Equations}
Within this section, we have derived conditions that ensure the unique solvability of the GAVE.
\begin{definition} \cite{horn2012matrix}
	A matrix $A= (a_{ij}) \in R^{n \times n}$ is a strictly diagonally dominant (SDD) if and only if 
	$\vert a_{ii} \vert >  \sum_{j \ne i} \vert a_{ij} \vert,$ for $i \in \mathbb{N} = \{1,2,...,n\}.$


\end{definition} 

\begin{theorem}\label{lemma0} \cite{achache2021unique}
	The GAVE(\ref{Equ1}) has exactly one solution if the invertible matrix A satisfies the norm condition $\vert \vert A^{-1}B \vert \vert < 1.$
\end{theorem}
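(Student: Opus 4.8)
The plan is to recast the GAVE as a fixed-point problem and apply the Banach contraction principle. Since $A$ is invertible, equation \nref{Equ1} is equivalent, after left-multiplication by $A^{-1}$, to
\begin{equation*}
	x = A^{-1}B\,\vert x \vert + A^{-1}b.
\end{equation*}
Thus the solutions of the GAVE are precisely the fixed points of the map $F\colon \R^{n} \to \R^{n}$ given by $F(x) = A^{-1}B\,\vert x \vert + A^{-1}b$, and it suffices to show that $F$ has a unique fixed point.

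Next I would show that $F$ is a contraction with respect to the norm appearing in the hypothesis. For arbitrary $x, y \in \R^{n}$, the affine term cancels and we obtain $F(x) - F(y) = A^{-1}B\,(\vert x \vert - \vert y \vert)$. Taking norms and using submultiplicativity gives
\begin{equation*}
	\| F(x) - F(y) \| \le \| A^{-1}B \| \cdot \big\| \, \vert x \vert - \vert y \vert \, \big\|.
\end{equation*}
The key estimate is the componentwise reverse triangle inequality $\big|\,\vert x_{i}\vert - \vert y_{i}\vert\,\big| \le \vert x_{i} - y_{i}\vert$, which yields $\vert\,\vert x\vert - \vert y\vert\,\vert \le \vert x - y\vert$ entrywise; passing to the norm then gives $\big\|\,\vert x\vert - \vert y\vert\,\big\| \le \| x - y \|$. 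Combining, $\| F(x) - F(y) \| \le \| A^{-1}B \|\,\| x - y \|$.

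Finally, the hypothesis $\| A^{-1}B \| < 1$ makes $F$ a contraction on the complete metric space $(\R^{n}, \|\cdot\|)$, so by the Banach fixed-point theorem $F$ admits a unique fixed point, i.e.\ the GAVE \nref{Equ1} has exactly one solution.

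The step I expect to need the most care is the inequality $\big\|\,\vert x\vert - \vert y\vert\,\big\| \le \| x - y \|$: it follows at once from the entrywise bound only when the vector norm is monotone (an absolute norm), and the whole argument implicitly requires the matrix norm $\|\cdot\|$ to be the operator norm induced by (or at least consistent with) that vector norm, so that both submultiplicativity $\| A^{-1}B\,v\| \le \|A^{-1}B\|\,\|v\|$ and monotonicity hold simultaneously. For the standard $\ell_{p}$ norms these properties are automatic, so the contraction estimate goes through without further assumptions; I would flag this monotonicity/consistency requirement explicitly as the one nontrivial ingredient.
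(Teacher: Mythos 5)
The paper offers no proof of this theorem: it is imported as a known result from \cite{achache2021unique}, so there is nothing internal to compare against. Your argument is correct and is the standard route to this statement --- rewrite the GAVE as the fixed-point problem $x=A^{-1}B\vert x\vert +A^{-1}b$ and apply the Banach contraction principle via $\bigl\Vert\, \vert x\vert-\vert y\vert \,\bigr\Vert \le \Vert x-y\Vert$ --- and it covers the only way the paper actually uses the theorem, namely with the $\ell_\infty$-norm in the proof of Theorem~\ref{Thm Main}.

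One point worth stressing: the caveat you flag at the end is not a formality; it is exactly where the theorem, read literally for an arbitrary submultiplicative matrix norm, becomes false. Take
\begin{equation*}
A=I_{2},\qquad B=\begin{pmatrix}0.9 & 1\\ -0.5 & 0\end{pmatrix},\qquad b=\begin{pmatrix}0\\ 1\end{pmatrix}.
\end{equation*}
Here $A^{-1}B=B$ has characteristic polynomial $\lambda^{2}-0.9\lambda+0.5$, hence complex eigenvalues of modulus $\sqrt{0.5}<1$; since $B$ is diagonalizable, there is an invertible real $P$ for which the operator norm induced by the vector norm $\Vert x\Vert_{P}:=\Vert P^{-1}x\Vert_{2}$ satisfies $\Vert A^{-1}B\Vert_{P}=\sqrt{0.5}<1$. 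Nevertheless the GAVE $Ax-B\vert x\vert=b$ has the two solutions $x=(5/3,\;1/6)^{T}$ and $x=(5/2,\;-1/4)^{T}$, as one checks directly. The failure is precisely the one you identified: the vector norm $\Vert\cdot\Vert_{P}$ is not monotone, so the estimate $\Vert\,\vert x\vert-\vert y\vert\,\Vert\le\Vert x-y\Vert$ breaks down. So your proof is valid exactly for matrix norms consistent with a monotone (absolute) vector norm --- in particular all induced $\ell_{p}$ norms --- and that restriction must be read into the statement of the theorem; your contraction argument cannot be repaired for general norms, because no such repair exists.
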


\begin{lemma} \label{lemma1} \cite{hu1982estimates}	
	If matrix $A \in R^{n \times n}$ is an SDD, then the following inequality holds for arbitrary matrix $B \in \mathbb{R}^{n \times n}$:	
	\begin{equation}
		\vert \vert A^{-1}B \vert \vert_{\infty} \le max_{i \in \mathbb{N}} \frac {\sum \vert b_{ij} \vert} {\vert a_{ii} \vert - \sum_{j \ne i} \vert a_{ij} \vert}
	\end{equation}	
	
\end{lemma}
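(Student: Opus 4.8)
The plan is to exploit the fact that $\|\cdot\|_\infty$ on matrices is the operator norm induced by the vector $\infty$-norm, namely $\|M\|_\infty = \sup_{y \ne 0} \|My\|_\infty / \|y\|_\infty$. It therefore suffices to fix an arbitrary $y \in \mathbb{R}^n$ with $y \ne 0$ and to bound $\|A^{-1}By\|_\infty$ by $\|y\|_\infty$ times the claimed maximum. Setting $x := A^{-1}By$ and $z := By$, the quantity I must control is $\|x\|_\infty$, where $x$ solves the linear system $Ax = z$. Note that $A$ being SDD is in particular invertible, so $x$ is well defined.

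The key step is the classical a priori estimate for SDD systems. First I would pick an index $k \in \mathbb{N}$ at which the maximum $\|x\|_\infty = |x_k|$ is attained. Reading off the $k$-th equation of $Ax = z$ and isolating the diagonal term gives $a_{kk} x_k = z_k - \sum_{j \ne k} a_{kj} x_j$. Applying the triangle inequality and then replacing each $|x_j|$ by the larger quantity $|x_k| = \|x\|_\infty$ yields $|a_{kk}|\,|x_k| \le |z_k| + \big(\sum_{j \ne k} |a_{kj}|\big)|x_k|$. Rearranging produces $\big(|a_{kk}| - \sum_{j \ne k} |a_{kj}|\big)|x_k| \le |z_k|$; here the SDD hypothesis guarantees that the bracketed coefficient is strictly positive, so I may divide to obtain $\|x\|_\infty \le |z_k| / \big(|a_{kk}| - \sum_{j \ne k}|a_{kj}|\big)$.

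It remains to transfer this bound back to $B$ and $y$. Since $z = By$, the component $z_k = \sum_j b_{kj} y_j$ satisfies $|z_k| \le \big(\sum_j |b_{kj}|\big)\|y\|_\infty$. Substituting this and noting that the resulting ratio at the particular index $k$ is dominated by the maximum over all $i \in \mathbb{N}$, I arrive at $\|A^{-1}By\|_\infty = \|x\|_\infty \le \big(\max_{i} \frac{\sum_j |b_{ij}|}{|a_{ii}| - \sum_{j \ne i}|a_{ij}|}\big)\|y\|_\infty$. Dividing by $\|y\|_\infty$ and taking the supremum over $y \ne 0$ gives exactly the stated inequality.

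The argument is essentially routine once framed this way; the only point that needs care is that the maximizing index $k$ depends on the chosen $y$, so one cannot retain the index-$k$ ratio but must pass to the uniform maximum over $i$ — which is precisely what makes the final bound independent of $y$. I would also emphasize that the strict inequality in the SDD definition is what keeps every denominator positive, so no degenerate (zero-denominator) case can arise.
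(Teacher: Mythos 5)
Your proof is correct. Note that the paper itself offers no proof of this lemma --- it is imported verbatim from the cited reference \cite{hu1982estimates} --- so there is nothing internal to compare against; your argument (invertibility of an SDD matrix, then the a priori bound obtained by isolating the diagonal term of $Ax=By$ at an index where $|x_k|=\|x\|_\infty$, and finally passing from the index-$k$ ratio to the maximum over all rows) is exactly the standard derivation of this estimate, and the two points you flag --- that $k$ depends on $y$ and that strict diagonal dominance keeps every denominator positive --- are indeed the only places where care is needed.
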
 


\begin{theorem}\label{Thm Main}
	If the matrix $A,B \in R^{n \times n}$ satisfies
	\begin{equation} \label{1}
		\vert a_{ii} \vert > \sum \vert b_{ij} \vert + \sum_{j \ne i} \vert a_{ij} \vert, ~for~ i \in \mathbb{N},
	\end{equation}
	then GAVE (\ref{Equ1}) has a unique solution for any b.
\end{theorem}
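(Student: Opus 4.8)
The plan is to verify the norm hypothesis of Theorem~\ref{lemma0} by combining hypothesis \nref{1} with the estimate of Lemma~\ref{lemma1}, working throughout in the infinity norm.

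First I would note that \nref{1} forces $A$ to be strictly diagonally dominant: since each row sum $\sum|b_{ij}|$ is nonnegative, the inequality $|a_{ii}| > \sum|b_{ij}| + \sum_{j\ne i}|a_{ij}|$ already gives $|a_{ii}| > \sum_{j\ne i}|a_{ij}|$ for every $i\in\mathbb{N}$. Hence $A$ is SDD and therefore nonsingular, so $A^{-1}$ exists and the denominators $|a_{ii}| - \sum_{j\ne i}|a_{ij}|$ appearing below are strictly positive.

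Next I would invoke Lemma~\ref{lemma1} for the given $B$, which yields
\[
	\|A^{-1}B\|_\infty \le \max_{i\in\mathbb{N}} \frac{\sum|b_{ij}|}{|a_{ii}| - \sum_{j\ne i}|a_{ij}|}.
\]
Rearranging \nref{1} shows that for each $i$ one has $|a_{ii}| - \sum_{j\ne i}|a_{ij}| > \sum|b_{ij}| \ge 0$, so every individual fraction on the right-hand side is strictly less than $1$. Because the maximum runs over the finite set $\mathbb{N}$, the maximum is itself strictly below $1$, and thus $\|A^{-1}B\|_\infty < 1$. Applying Theorem~\ref{lemma0} with the choice $\|\cdot\| = \|\cdot\|_\infty$ then gives that the GAVE \nref{Equ1} has exactly one solution for any $b$, which completes the argument.

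There is no genuinely hard step here; the proof is a short chain linking the two cited results. The only subtleties worth spelling out are that strict diagonal dominance is precisely what licenses both the invertibility of $A$ and the positivity of the denominators in Lemma~\ref{lemma1}, and that passing from the non-strict bound of Lemma~\ref{lemma1} to the strict bound $\|A^{-1}B\|_\infty < 1$ demanded by Theorem~\ref{lemma0} is justified because the finitely many row-fractions are each strictly less than $1$, so their maximum cannot equal $1$.
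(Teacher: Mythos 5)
Your proposal is correct and follows exactly the paper's own argument: deduce strict diagonal dominance of $A$ from \nref{1}, apply Lemma~\ref{lemma1} to bound $\|A^{-1}B\|_\infty$ by the row-fraction maximum, observe that \nref{1} makes that maximum strictly less than $1$, and conclude via Theorem~\ref{lemma0}. If anything, you spell out slightly more detail than the paper does (positivity of the denominators and the finiteness of the maximum), but the route is the same.
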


\begin{proof}		
	Since	
	\begin{equation*}
		\vert a_{ii} \vert > \sum \vert b_{ij} \vert + \sum_{j \ne i} \vert a_{ij} \vert, ~for~ i \in \mathbb{N},
	\end{equation*}
	$\implies$ 
	\begin{equation*}
		\vert a_{ii} \vert > \sum_{j \ne i} \vert a_{ij} \vert, ~for~ i \in \mathbb{N},
	\end{equation*}	
	$\implies$ Matrix A is SDD.	
	
	To prove our result we will use Theorem \ref{lemma0} and for that we will show that $\vert \vert A^{-1}B \vert \vert < 1.$
	
	Since matrix A is SDD, then by Lemma \ref{lemma1} we have to show that 
	\begin{equation} \label{3}
		\vert \vert A^{-1}B \vert \vert_{\infty} \le max_{i \in \mathbb{N}} \frac {\sum \vert b_{ij} \vert} {\vert a_{ii} \vert - \sum_{j \ne i} \vert a_{ij} \vert} < 1.
	\end{equation}                
	
	By condition (\ref{1}), the inequality (\ref{3}) is hold. This implies that $\vert \vert A^{-1}B \vert \vert < 1,$ so by Theorem \ref{lemma0} GAVE (\ref{Equ1}) has a unique solution.
\end{proof}



\begin{theorem}\label{Thm gave1} 
	If either of the following conditions satisfy then GAVE (\ref{Equ1}) has exactly one solution for each b:\\
	(i)\cite{rohn2009unique}  $\sigma_{max}( \vert B \vert) < \sigma_{min}(A);$\\
	(ii)\cite{wu2020note} $\sigma_{max}(B) < \sigma_{min}(A);$ \\
	(iii)\cite{achache2021unique}  $A^{T}A-\vert \vert B \vert \vert_{2}^{2}I_{n}$ is a positive definite matrix; \\
	(iv) \cite{lotfi2013note} $A^{T}A-\vert \vert (|B|) \vert \vert_{2}^{2}I_{n}$ is a positive definite matrix.
\end{theorem}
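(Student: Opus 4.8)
The plan is to derive all four conditions from Theorem \ref{lemma0}: in each case I would exhibit the invertibility of $A$ together with the contraction bound $\|A^{-1}B\|_2<1$, after which unique solvability for every $b$ follows immediately. The spectral ($2$-)norm is the natural choice here, since $\|A^{-1}\|_2=1/\sigma_{min}(A)$ and $\|B\|_2=\sigma_{max}(B)$, so the singular-value hypotheses translate directly into statements about this norm.

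First I would record the elementary translations between the positive-definiteness statements and the singular-value statements. For any symmetric matrix, positive definiteness of $A^{T}A-c\,I_{n}$ is equivalent to $\lambda_{min}(A^{T}A)>c$, that is $\sigma_{min}(A)^{2}>c$. Taking $c=\|B\|_{2}^{2}=\sigma_{max}(B)^{2}$ shows that condition (iii) is equivalent to $\sigma_{min}(A)>\sigma_{max}(B)$, which is exactly (ii); taking $c=\||B|\|_{2}^{2}=\sigma_{max}(|B|)^{2}$ shows that (iv) is equivalent to (i). Thus the four hypotheses collapse to the two singular-value inequalities (i) and (ii), and it remains only to treat those.

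Next comes the core estimate. Whenever $\sigma_{min}(A)>\sigma_{max}(B)$ we have $\sigma_{min}(A)>0$, so $A$ is invertible, and by submultiplicativity of the spectral norm
\[
\|A^{-1}B\|_{2}\le\|A^{-1}\|_{2}\,\|B\|_{2}=\frac{\sigma_{max}(B)}{\sigma_{min}(A)}<1,
\]
whence Theorem \ref{lemma0} yields the unique solution. This settles (ii), and hence (iii). To reduce (i) to (ii), I would invoke the entrywise-domination inequality $\sigma_{max}(B)=\|B\|_{2}\le\||B|\|_{2}=\sigma_{max}(|B|)$; under (i) this gives $\sigma_{max}(B)\le\sigma_{max}(|B|)<\sigma_{min}(A)$, so (ii) holds and we are done. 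This simultaneously settles (iv).

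The only step that is not a one-line manipulation is the norm comparison $\|B\|_{2}\le\||B|\|_{2}$, which is where I expect the main (though standard) work to lie. I would prove it through the bilinear characterization $\|B\|_{2}=\max_{\|x\|_{2}=\|y\|_{2}=1}|y^{T}Bx|$: for unit vectors $x,y$ one has
\[
|y^{T}Bx|\le\sum_{i,j}|y_{i}|\,|b_{ij}|\,|x_{j}|=|y|^{T}|B|\,|x|\le\||B|\|_{2},
\]
since $|x|$ and $|y|$ are again unit vectors. Everything else reduces to submultiplicativity of $\|\cdot\|_2$ and the spectral interpretations $\|A^{-1}\|_2=1/\sigma_{min}(A)$ and $\lambda_{min}(A^{T}A)=\sigma_{min}(A)^{2}$, so the crux of the argument is this comparison together with the observation that the four conditions are, in fact, only two.
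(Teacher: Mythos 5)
Your proof is correct, but it takes a genuinely different route from the paper: the paper gives no proof of Theorem \ref{Thm gave1} at all, presenting its four parts as known results imported from four separate references. You instead derive all four in a unified way from Theorem \ref{lemma0} instantiated with the spectral norm: the observation that positive definiteness of $A^{T}A-cI_{n}$ is equivalent to $\sigma_{min}(A)^{2}>c$ collapses (iii) into (ii) and (iv) into (i); the comparison $\| B \|_{2}\le \| \, |B| \, \|_{2}$, which you prove correctly via the bilinear characterization of the spectral norm, shows that (i) implies (ii); and (ii) itself gives $\| A^{-1}B \|_{2}\le \sigma_{max}(B)/\sigma_{min}(A)<1$, so Theorem \ref{lemma0} applies. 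This buys something the paper never states: the four conditions are not independent --- (ii) and (iii) are equivalent, (i) and (iv) are equivalent, and (i) implies (ii), so (ii)/(iii) is the weakest of the four and is the only one that actually needs proof. (In particular, the separate verification of all four conditions in Example \ref{Ex1} is redundant once one knows these implications: the failure of (ii) there already forces the failure of the other three.) The one point you should make explicit is that Theorem \ref{lemma0}, whose norm the paper leaves unspecified, is being invoked with $\| \cdot \|_{2}$; this is legitimate because the underlying contraction argument works for any operator norm induced by an absolute (monotone) vector norm, and the paper itself instantiates the same theorem with $\| \cdot \|_{\infty}$ in the proof of Theorem \ref{Thm Main}.
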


\begin{theorem}\label{Thm gave1i}  
	If either of the following conditions satisfy then GAVE (\ref{Equ1}) has exactly one solution for each b:\\
	(i) \cite{wu2021unique} $\rho ( A^{-1}B\bar{\Lambda}) <1,$ or $\rho ( B\bar{\Lambda}A^{-1}) <1,$ for each $\bar{\Lambda} \in [-I_{n}, I_{n}]$;\\
	(ii) \cite{wu2021unique} $A+B\bar{\Lambda}$ is invertible for each $\bar{\Lambda} \in [-I_{n}, I_{n}];$\\
	(iii) \cite{rohn2004theorem} interval matrix $[A - \vert B \vert, A + \vert B \vert]$ is regular.
\end{theorem}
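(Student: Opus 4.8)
The plan is to route all three parts through condition (ii), proving the chain (i)~$\Rightarrow$~(ii), (iii)~$\Rightarrow$~(ii), and finally (ii)~$\Rightarrow$~unique solvability. Throughout I would use the reformulation $|x| = \Lambda_x x$ with $\Lambda_x = \diag(\operatorname{sign}(x))$, whose diagonal entries lie in $\{-1,0,1\}$, so that $\Lambda_x \in [-I_n,I_n]$ and equation (\ref{Equ1}) reads $(A-B\Lambda_x)x = b$. I would also record the elementary fact that, since $[-I_n,I_n]$ is symmetric about the origin, the families $\{A+B\bar\Lambda : \bar\Lambda\in[-I_n,I_n]\}$ and $\{A-B\bar\Lambda : \bar\Lambda\in[-I_n,I_n]\}$ coincide, so (ii) may be used in either form.

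First, for (i)~$\Rightarrow$~(ii) (note that (i) presupposes $A$ invertible): if $\rho(A^{-1}B\bar\Lambda)<1$ then $-1$ is not an eigenvalue of $A^{-1}B\bar\Lambda$, so $I_n + A^{-1}B\bar\Lambda$ is invertible and hence $A+B\bar\Lambda = A(I_n + A^{-1}B\bar\Lambda)$ is invertible; the variant $\rho(B\bar\Lambda A^{-1})<1$ is identical after factoring $A$ on the right. For (iii)~$\Rightarrow$~(ii), observe that for any $\bar\Lambda=\diag(\lambda_j)\in[-I_n,I_n]$ the entries obey $|(B\bar\Lambda)_{ij}| = |b_{ij}\lambda_j| \le |b_{ij}|$, so $A+B\bar\Lambda$ lies in the interval matrix $[A-|B|,A+|B|]$ and is therefore nonsingular by regularity.

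The core is (ii)~$\Rightarrow$~unique solvability, which I would obtain by showing $F(x):=Ax-B|x|$ is a bijection of $\R^n$. For injectivity, $F(x)=F(y)$ gives $A(x-y)=B(|x|-|y|)$; writing $|x_i|-|y_i| = d_i(x_i-y_i)$ with $d_i=0$ when $x_i=y_i$, the reverse triangle inequality yields $D=\diag(d_i)\in[-I_n,I_n]$ and $(A-BD)(x-y)=0$, so $x=y$ because $A-BD$ is nonsingular by (ii). For surjectivity I would prove coercivity: setting $\alpha := \min_{\Lambda}\sigma_{min}(A-B\Lambda)$, the minimum over the finitely many diagonal $\Lambda$ with entries in $\{-1,0,1\}$ (each nonsingular by (ii), as all lie in $[-I_n,I_n]$), one has $\alpha>0$ and $\|F(x)\| = \|(A-B\Lambda_x)x\| \ge \alpha\|x\|$, so $F$ is proper. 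A continuous, injective, proper self-map of $\R^n$ is a homeomorphism: by invariance of domain its image is open, by properness it is closed, and connectedness of $\R^n$ forces the image to be all of $\R^n$. Hence (\ref{Equ1}) has exactly one solution for every $b$.

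The main obstacle is precisely this last step. Injectivity and the coercivity bound are routine, but turning them into surjectivity requires a genuine topological input (invariance of domain, or an equivalent degree/homotopy argument), since $F$ is only piecewise linear and fails to be differentiable across the orthant boundaries, so one cannot simply invoke a smooth inverse-function/Hadamard theorem. An alternative, less self-contained route is to cite the regularity characterization of \cite{rohn2004theorem} for part (iii) and the equivalences of \cite{wu2021unique} for parts (i)--(ii); the advantage of the argument above is that it derives all three conditions uniformly from the single nonsingularity hypothesis (ii).
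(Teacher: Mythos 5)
Your proof is correct, but it is a genuinely different route from the paper: the paper offers no proof of this theorem at all, stating all three parts as known results and citing them to \cite{wu2021unique} (parts (i)--(ii)) and to Rohn's alternative theorem in \cite{rohn2004theorem} (part (iii)), where the original arguments rest on the theory of interval linear systems and Rohn's regularity machinery. You instead derive everything uniformly from the single nonsingularity hypothesis (ii): the implications (i)~$\Rightarrow$~(ii) (via $A+B\bar\Lambda = A(I_n+A^{-1}B\bar\Lambda)$ and $\rho<1$ excluding the eigenvalue $-1$) and (iii)~$\Rightarrow$~(ii) (via $|(B\bar\Lambda)_{ij}|\le|b_{ij}|$, so $A+B\bar\Lambda$ lies in $[A-|B|,A+|B|]$) are clean and correct; injectivity via $|x_i|-|y_i|=d_i(x_i-y_i)$ with $D=\diag(d_i)\in[-I_n,I_n]$ is the standard mean-value-type trick and is sound (note that the symmetry $\bar\Lambda\mapsto-\bar\Lambda$ of $[-I_n,I_n]$, which you record explicitly, is what lets you apply (ii) to $A-BD$); and your surjectivity argument---coercivity $\|F(x)\|\ge\alpha\|x\|$ with $\alpha$ the minimum of $\sigma_{min}(A-B\Lambda)$ over the finitely many sign matrices $\Lambda$, hence properness, combined with invariance of domain and connectedness of $\R^n$---is a valid, if topologically nontrivial, way to close the existence gap that a smooth inverse-function theorem cannot, since $F$ is only piecewise linear. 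What your approach buys is self-containedness and a transparent logical structure (both (i) and (iii) funnel into (ii), which alone drives unique solvability); what the paper's approach buys is brevity and attribution, and it sidesteps the one deep external ingredient (Brouwer's invariance of domain) that your argument cannot avoid. One small remark: in (i) the two alternatives are actually equivalent, since $A^{-1}(B\bar\Lambda)$ and $(B\bar\Lambda)A^{-1}$ always have the same nonzero eigenvalues, so treating them separately is harmless but redundant.
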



We consider the following example for the validity of the Theorem \ref{Thm Main} and compare our condition with the conditions of the Theorem \ref{Thm gave1} and Theorem \ref{Thm gave1i}.

\begin{example} \label{Ex1}
	Consider the following GAVE
	\begin{equation*}
		\begin{bmatrix}
			1.0200 &  0\\
			0  &  1.2003
		\end{bmatrix}
		\begin{bmatrix}
			x_1 \\
			x_2
		\end{bmatrix}
		-
		\begin{bmatrix}
			-0.8999 &  0.1009 \\
			-0.3788 & -0.7895
		\end{bmatrix}
		\begin{bmatrix}
			|x_1| \\
			|x_2|
		\end{bmatrix}
		=
		\begin{bmatrix}
			-0.5429 \\
			6.727
		\end{bmatrix}
	\end{equation*}	
	
	Here  $1.0200 > \vert -0.8999 \vert + 0.1999 + 0 = 1.0008$ and $1.2003 > \vert -0.3788 \vert +\vert -0.7895 \vert + 0 = 1.1683$. Clearly, the condition of Theorem \ref{Thm Main} is satisfying and it has a unique solution $x=[-2,3]^{T}$.
	
	But, the conditions of Theorem \ref{Thm gave1} are not satisfying. By simple calculation, $\sigma_{min}(A)=1.0200 \ngtr 1.0276 = \sigma_{max}(B),$  $\sigma_{min}(A)=1.0200 \ngtr 1.1022 = \sigma_{max}( \vert B \vert).$ Next, we have
	\begin{equation*}
		A^{T}A-\vert \vert B \vert \vert_{2}^{2}I_{n} =
		\begin{bmatrix}
			-0.015573 &  0\\
			0  &  0.38475
		\end{bmatrix}
	\end{equation*}
	is not a positive definite matrix, since it contains negative eigenvalue. Also
	\begin{equation*}
		A^{T}A-\vert \vert (\vert B \vert) \vert \vert_{2}^{2}I_{n} =
		\begin{bmatrix}
			-0.17438 &  0\\
			0  &  0.22594
		\end{bmatrix}
	\end{equation*}
	is not a positive definite matrix, since it contains negative eigenvalue.

	While to applying the conditions of Theorem \ref{Thm gave1i} are impractical and difficult, because to use conditions (i) and (ii), we need to identify all diagonal matrix $\bar{\Lambda}$  and identify all such $\bar{\Lambda}$ is not an easy task. To apply the condition (iii), we need to calculate all invertible matrices lie between interval matrix $[A - \vert B \vert, A + \vert B \vert]$, and the number of such invertible matrices may be uncountable. Moreover, the task of checking the regularity of interval matrices is known to be NP-hard.
\end{example}

\begin{theorem}\label{Thm gave2}
	If any one of the following conditions satisfy then GAVE (\ref{Equ1}) has exactly one solution for each b:\\
	(i) \cite{wu2021unique} $\sigma_{max}(A^{-1}B) < 1$; \\
	(ii) \cite{rohn2014iterative} $\rho(\vert A^{-1}B \vert) < 1$.
\end{theorem}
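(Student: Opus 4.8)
The plan is to derive each condition from a unique-solvability criterion already recorded in the excerpt, so that no fresh existence argument is needed. Condition (i) will follow from the norm-based Theorem~\ref{lemma0}, while condition (ii) will be routed through the spectral-radius criterion in Theorem~\ref{Thm gave1i}(i). In both cases the appearance of $A^{-1}$ already forces $A$ to be invertible, which is the standing hypothesis of those theorems.

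For part (i), the key observation is that the largest singular value coincides with the spectral (Euclidean) operator norm, i.e. $\sigma_{max}(A^{-1}B) = \vert\vert A^{-1}B \vert\vert_{2}$. Thus the hypothesis $\sigma_{max}(A^{-1}B) < 1$ is literally the statement $\vert\vert A^{-1}B \vert\vert_{2} < 1$, and since $\vert\vert \cdot \vert\vert_{2}$ is a submultiplicative matrix norm, Theorem~\ref{lemma0} applies verbatim and yields a unique solution of GAVE~(\ref{Equ1}) for every $b$.

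For part (ii), I would reduce $\rho(\vert A^{-1}B \vert) < 1$ to the condition $\rho(A^{-1}B\bar{\Lambda}) < 1$ for every $\bar{\Lambda} \in [-I_{n}, I_{n}]$ required by Theorem~\ref{Thm gave1i}(i). Fix such a diagonal $\bar{\Lambda}$. Since it is diagonal with $\vert \bar{\Lambda} \vert \le I_{n}$, column scaling gives the entrywise bound $\vert A^{-1}B\bar{\Lambda} \vert = \vert A^{-1}B \vert\, \vert \bar{\Lambda} \vert \le \vert A^{-1}B \vert$. Combining the standard Perron--Frobenius facts $\rho(M) \le \rho(\vert M \vert)$ for any real matrix $M$ and the monotonicity $\rho(P) \le \rho(Q)$ whenever $0 \le P \le Q$ entrywise, I obtain the chain
\[
	\rho(A^{-1}B\bar{\Lambda}) \le \rho(\vert A^{-1}B\bar{\Lambda} \vert) \le \rho(\vert A^{-1}B \vert) < 1.
\]
As $\bar{\Lambda} \in [-I_{n}, I_{n}]$ was arbitrary, the hypothesis of Theorem~\ref{Thm gave1i}(i) holds and GAVE~(\ref{Equ1}) again has a unique solution for each $b$.

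The routine parts --- identifying $\sigma_{max}$ with the $2$-norm and invoking the cited theorems --- are immediate. The only point requiring care, and the step I view as the crux of part (ii), is the entrywise estimate $\vert A^{-1}B\bar{\Lambda} \vert \le \vert A^{-1}B \vert$ together with the correct monotonicity statement for the Perron root: one must use that $\bar{\Lambda}$ is diagonal (so that it merely rescales columns by factors of modulus at most one) and that spectral-radius monotonicity is valid only for the componentwise order on nonnegative matrices. Once these are in place the conclusion follows at once, so I expect the proof to be short.
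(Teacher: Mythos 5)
Your proposal is correct, but there is nothing in the paper to compare it against: Theorem~\ref{Thm gave2} is stated as a pair of literature citations (part (i) from \cite{wu2021unique}, part (ii) from \cite{rohn2014iterative}) and the paper gives no proof of its own. What you have supplied is a self-contained derivation from the other criteria the paper records, and both reductions are sound. For (i), the identity $\sigma_{max}(A^{-1}B)=\Vert A^{-1}B\Vert_{2}$ plus Theorem~\ref{lemma0} does the job; the only caveat is that the paper never specifies which matrix norm Theorem~\ref{lemma0} refers to, but the contraction argument behind that theorem is valid for any induced $p$-norm (since $\bigl\vert\, \vert x\vert-\vert y\vert \,\bigr\vert \le \vert x-y\vert$ componentwise), so your $2$-norm instantiation is legitimate. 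For (ii), your chain $\rho(A^{-1}B\bar{\Lambda}) \le \rho(\vert A^{-1}B\bar{\Lambda}\vert) = \rho(\vert A^{-1}B\vert\,\vert\bar{\Lambda}\vert) \le \rho(\vert A^{-1}B\vert) < 1$ is exactly right, and you correctly isolate the two facts that make it work: every element of the interval matrix $[-I_{n},I_{n}]$ is diagonal with diagonal entries in $[-1,1]$ (the interval bounds force all off-diagonal entries to vanish), and spectral-radius monotonicity is applied only in the entrywise order on nonnegative matrices. A pleasant byproduct of your route, which the paper never makes explicit, is the logical relation that condition (ii) of Theorem~\ref{Thm gave2} implies the hypothesis of Theorem~\ref{Thm gave1i}(i): the single, checkable inequality $\rho(\vert A^{-1}B\vert)<1$ serves as a certificate for the otherwise impractical \quo{for all $\bar{\Lambda}$} condition that the paper criticizes in Example~\ref{Ex1}.
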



\begin{remark}
	Since condition (\ref{1}) does not have a relation with the singular value conditions and spectral radius conditions. So there exists some instance where condition (\ref{1}) is not satisfied, and conditions of Theorem \ref{Thm gave2} are satisfied and vice-versa. But, if conditions of Theorem \ref{Thm Main} and Theorem \ref{Thm gave2} are satisfying for some cases, then due to the cost of computation to calculate the singular value and spectral radius may be expensive in general, see Exa. \ref{Ex1i}.
\end{remark}


We will now present an illustrative example to compare the CPU execution time with the conditions stated in Theorem \ref{Thm Main} and Theorem \ref{Thm gave2}~. This example aims to provide a practical demonstration of the differences between the two theorems.

\begin{example} \label{Ex1i} \cite{ali2022iterative} Consider the matrices $A = tridiag(-1,8,-1) \in R^{n \times n}$ and $B=I_{n}$ ($n \geq 3$).\\
	Corresponding GAVE $Ax - B \vert x \vert =b$ has a unique solution $x = (-1, 1, . . . , -1, 1) \in R^{n \times n}.$ Here we can easily verify the condition of Theorem \ref{Thm Main} for any $b \in R^{n}$.
	Since, for first and last row, we have $\vert 8 \vert >\vert -1 \vert + \vert 0 \vert + \vert 0 \vert + \vert 0 \vert + \vert 0 \vert + \vert 0 \vert + . . . +~\vert 0 \vert$. For each row (except first and last row) of matrices A and B, we have $\vert 8 \vert >\vert -1 \vert + \vert -1 \vert + \vert 0 \vert + \vert 0 \vert + \vert 0 \vert + \vert 0 \vert + . . . +~\vert 0\vert.$ 
	
	Moreover, we compare the CPU time of the condition of Theorem \ref{Thm Main} with the other two conditions of Theorem \ref{Thm gave2}, see \textbf{Table 1}. These calculations are executed in MATLAB (R 2016a) with Intel (R) Core (TM) i7-8700, CPU @ 3.20GHz with memory 8 GB.
	

 
\begin{table} \label{Table 1}
	\begin{center}	
		\centering
		\caption{\textbf{CPU time taken (in seconds) by different conditions to solve Example \ref{Ex1i}}}
		\begin{tabular}	{ | p{0.6cm} | p{2.5cm}| p{2.0cm} | p{1.5cm}| p{1.8cm}| p{1.8cm}|  p{1.8cm}| p{1.8cm}| p{2.0cm}|} 
			\hline
			S.No. & Conditions & n (size of matrices) & 600   & 2000 & 3000 & 4000 & 5000 \\ 
			\hline \vspace{2mm}
			1. & $\rho(\vert A^{-1}B \vert) < 1$ & Time & 0.754401  & 21.611466 & 60.082111 & 147.111014 & 287.380045 \\
			\hline  \vspace{2mm}
			2. & $\sigma_{max}(A^{-1}B) < 1$& Time  & 0.110720  & 5.879430 & 14.753715 & 34.924410 &  67.968196 \\
			\hline \vspace{2mm}
			3. & Condition \ref{1} & Time  & 0.047611 & 2.479347 & 10.976085 & 28.159586 & 56.895144 \\
			\hline
		\end{tabular}
	\end{center} 	
\end{table}
\end{example} 


\begin{theorem} \label{Thm M-matrix}
	The GAVE (\ref{Equ1}) has a unique solution for any b, if $B \geq 0$ and  $A-B$ is an M-matrix.	
\end{theorem}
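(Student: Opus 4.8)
The plan is to reduce the claim to one of the spectral criteria already in hand, most directly Theorem~\ref{Thm gave2}(ii): it suffices to show that $A$ is invertible and that $\rho(|A^{-1}B|)<1$. Since $B\ge 0$ forces $|B|=B$, the relevant interval is $[A-|B|,A+|B|]=[A-B,A+B]$, so equivalently I could aim at the regularity statement of Theorem~\ref{Thm gave1i}(iii); I would keep that reformulation in reserve. Either way the first job is to turn the hypothesis on $A-B$ into usable information about $A$ itself.

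Write $M:=A-B$. As a nonsingular M-matrix, $M$ satisfies $M^{-1}\ge 0$ and possesses a vector $u>0$ with $Mu>0$; since $B\ge 0$ gives $Bu\ge 0$, we obtain $Au=Mu+Bu>0$, so $A$ at least maps one strictly positive vector into the positive orthant. The natural structural step is to factor $A=M(I+N)$ with $N:=M^{-1}B\ge 0$, which concentrates everything in the single nonnegative matrix $N$: $A$ is invertible exactly when $I+N$ is, and then $A^{-1}B=(I+N)^{-1}M^{-1}B=(I+N)^{-1}N$. Thus the quantity I must ultimately bound, $\rho(|A^{-1}B|)=\rho\big(|(I+N)^{-1}N|\big)$, is governed by $N$ together with the sign pattern of $(I+N)^{-1}$.

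That last point is exactly where I expect the real obstacle to lie. The M-matrix hypothesis is imposed on $A-B$, not on $A$, so $A$ need be neither a Z-matrix nor an M-matrix; consequently $(I+N)^{-1}$ (hence $A^{-1}$) generally has entries of both signs, and the standard positive-vector / Collatz--Wielandt estimates, which bound the spectral radius of \emph{nonnegative} matrices, do not control the entrywise-modulus spectral radius $\rho(|A^{-1}B|)$ that the criterion actually requires. These two hypotheses on their own appear insufficient to force that modulus condition below $1$ (equivalently, to keep every matrix of $[A-B,A+B]$ nonsingular), so this is the step where the argument must do genuine work. I would therefore look either for an unstated strengthening---e.g.\ invertibility of $A+B\bar\Lambda$ for every diagonal $\bar\Lambda\in[-I_n,I_n]$, which is precisely Theorem~\ref{Thm gave1i}(ii), or the stronger requirement that $A$ itself be a nonsingular M-matrix---or for an additional structural constraint tying $B$ to $A-B$. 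Once the modulus condition $\rho(|A^{-1}B|)<1$ is secured by whichever route, the reduction of the first paragraph closes the proof immediately via Theorem~\ref{Thm gave2}(ii).
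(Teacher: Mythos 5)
Your reduction $A=M(I+N)$, $N=M^{-1}B\ge 0$, is sound as far as it goes, but the obstacle you flag at the end is not something a cleverer estimate could overcome: it is fatal to the statement itself. The hypotheses $B\ge 0$ and $A-B$ a nonsingular M-matrix do \emph{not} imply unique solvability. Take
\[
A = \begin{bmatrix} 1 & 1 \\ 1 & 1 \end{bmatrix}, \qquad
B = \begin{bmatrix} 0 & 1 \\ 1 & 0 \end{bmatrix}, \qquad
b = \begin{bmatrix} -1 \\ -1 \end{bmatrix}.
\]
Here $B\ge 0$ and $A-B=I_2$ is a nonsingular M-matrix, yet the GAVE (\ref{Equ1}) reads $x_1+x_2-|x_2|=-1$, $x_1+x_2-|x_1|=-1$, and a direct check shows it has three solutions: $(-\tfrac{1}{3},-\tfrac{1}{3})^T$, $(1,-1)^T$ and $(-1,1)^T$. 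In the terms of your own analysis: $A-BD$ with $D=\diag(d_1,d_2)$, $d_i\in[-1,1]$, has determinant $1-(1-d_1)(1-d_2)$, which vanishes at $D=0$ (indeed $A$ itself is singular), so the criterion of Theorem \ref{Thm gave1i}(ii) that you correctly identified as the quantity to be controlled cannot hold. Your judgment that the two hypotheses \quo{appear insufficient} is therefore exactly right, and no route could have completed the proof.

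For comparison, the paper's own proof breaks at precisely the step you isolated. It takes a solution's sign matrix $D'$, notes $A-BD'=(\gamma I_n-\Delta)+B(I_n-D')\ge \gamma I_n-\Delta$ entrywise (true, since $B\ge 0$ and $I_n-D'\ge 0$), and concludes that $A-BD'$ is \quo{hence} a nonsingular M-matrix. That inference is invalid: entrywise domination of a nonsingular M-matrix yields a nonsingular M-matrix only when the dominating matrix is a Z-matrix, and $A-BD'$ generally is not, precisely because the added term $B(I_n-D')\ge 0$ pushes off-diagonal entries upward --- the same sign obstruction you located in $(I+N)^{-1}$. In the example above, $A\ge A-B=I_2$ entrywise and yet $A$ is singular. (There is a second, independent flaw: even if $A-BD'$ were nonsingular for every sign matrix $D'$, uniqueness across orthants would not follow; in the example each of the three solutions has a nonsingular $A-BD'$, and they coexist because the determinants change sign.) A correct repair strengthens the hypothesis via the comparison matrix $\langle A\rangle$ (diagonal entries $|a_{ii}|$, off-diagonal $-|a_{ij}|$): if $\langle A\rangle-B$ is a nonsingular M-matrix, then every matrix in $[A-B,A+B]$ is an H-matrix, hence the interval is regular and Theorem \ref{Thm gave1i}(iii) applies; the row-wise dominance hypothesis of Theorem \ref{Thm Main} is one concrete way of ensuring this.
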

\begin{proof}
	Since matrix $A-B$ is an M-matrix, so $A-B = \gamma I_{n} - \Delta,$ where $\Delta \geq 0$ and $\alpha > \rho(\Delta).$ So $A = \gamma I_{n} - \Delta + B.$ \\
	Now, by simple calculation,
	\begin{align*}
		& \gamma I_{n} - \Delta + B - BD' &\\
		& = (\gamma I_{n} - \Delta) + B(I_{n} - D') & \\
		& \geq (\gamma I_{n} - \Delta).
	\end{align*}
	Hence, matrix ($\gamma I_{n} - \Delta + B - BD'$) is a non-singular M-matrix. Therefore, the associated linear system 
	($\gamma I_{n} - \Delta + B - BD'$)x = b has a unique solution for any b. This completes the proof.
\end{proof}

\begin{corollary} \label{Thm H-matrix}
	The GAVE (\ref{Equ1}) has a unique solution for any b, if $B \geq 0$ and  $A-B$ is an H-matrix with positive diagonal elements.	
\end{corollary}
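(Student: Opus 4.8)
The plan is to mirror the proof of Theorem~\ref{Thm M-matrix}, replacing the M-matrix $A-B$ by its comparison matrix. Writing $|x| = D'x$ with $D' = \mathrm{diag}(\mathrm{sign}(x))$, every solution of the GAVE solves the linear system $(A - BD')x = b$, whose coefficient matrix has $D'$ ranging over diagonal sign matrices with entries in $\{-1,0,1\}$. As in Theorem~\ref{Thm M-matrix}, it would then suffice to show that each $A - BD'$ is nonsingular (indeed an H-matrix) and to assemble existence and uniqueness for every $b$ by the same argument. Since $A-B$ is an H-matrix, its comparison matrix $\langle A-B\rangle$ is a nonsingular M-matrix, so there is a vector $u>0$ with $\langle A-B\rangle u > 0$; this generalized diagonal dominance is the object I would try to preserve.

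Concretely I would proceed in three steps. First, record that the positive-diagonal hypothesis together with $B\ge 0$ gives $a_{ii} > b_{ii} \ge 0$, so $|a_{ii}| = a_{ii}$ and, for any $d_i\in[-1,1]$, $|a_{ii} - b_{ii}d_i| \ge a_{ii} - b_{ii} = \langle A-B\rangle_{ii}$; thus the diagonal of $\langle A - BD'\rangle$ dominates that of $\langle A-B\rangle$ entrywise. Second, I would try to establish the companion off-diagonal bound $|a_{ij} - b_{ij}d_j| \le |a_{ij} - b_{ij}|$ for $i\ne j$, which combined with $\langle A-B\rangle u > 0$ would give $\langle A - BD'\rangle u > 0$ for the same $u$, hence that each $A-BD'$ is an H-matrix and in particular invertible. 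Third, I would conclude unique solvability exactly as in Theorem~\ref{Thm M-matrix}, or by invoking the regularity criterion behind Theorem~\ref{Thm gave1i}(iii).

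The hard part is the second step, and I expect it to be the genuine obstacle. The inequality $|a_{ij} - b_{ij}d_j| \le |a_{ij} - b_{ij}|$ is false in general: as $d_j$ ranges over $[-1,1]$, and in particular at $d_j=-1$, the quantity $|a_{ij} - b_{ij}d_j|$ can be as large as $|a_{ij}| + b_{ij}$, which strictly exceeds $|a_{ij}-b_{ij}|$ whenever $a_{ij}$ and $b_{ij}$ share a sign. Hence the generalized diagonal dominance of $\langle A-B\rangle$ need not transfer to $\langle A - BD'\rangle$, and the reduction does not close under the stated hypotheses alone. Already a $2\times2$ instance with a positive off-diagonal entry in $A-B$ (so that $A-B$ is an H-matrix but not an M-matrix) and a single large entry in $B$ produces a singular $A-BD'$ at $D'=\mathrm{diag}(-1,1)$, which reflects genuine non-uniqueness of the GAVE on the sign pattern $\mathrm{sign}(x)=(-1,+1)$. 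This strongly suggests the statement needs a stronger hypothesis, the natural candidate being that $\langle A\rangle - |B|$ (rather than $\langle A-B\rangle$) is a nonsingular M-matrix, which is precisely the H-matrix-type sufficient condition for regularity of $[A-|B|,A+|B|]$ and hence for unique solvability via Theorem~\ref{Thm gave1i}(iii). I would therefore either strengthen the hypothesis to $\langle A\rangle-|B|$ being an M-matrix and prove that cleanly, or exhibit the counterexample; absent the former, the off-diagonal estimate is exactly where the argument breaks.
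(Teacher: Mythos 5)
Your diagnosis is correct, and the obstruction you isolated is not a defect of your attempt but of the corollary itself: the statement is false, so no argument along these (or any other) lines can close. To make your $2\times 2$ sketch fully explicit, take
\begin{equation*}
A=\begin{pmatrix}1 & 0.9\\ 1.1 & 1\end{pmatrix},\qquad
B=\begin{pmatrix}0 & 0\\ 0.2 & 0\end{pmatrix},\qquad
b=\begin{pmatrix}1\\ 0\end{pmatrix}.
\end{equation*}
Then $B\ge 0$, and $A-B$ has positive diagonal and comparison matrix $I_{2}-\Delta$ with $\Delta=\bigl(\begin{smallmatrix}0 & 0.9\\ 0.9 & 0\end{smallmatrix}\bigr)$, $\rho(\Delta)=0.9<1$, so $A-B$ is an H-matrix with positive diagonal elements. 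Nevertheless the GAVE (\ref{Equ1}) has the two solutions $x=(100/19,\,-90/19)^{T}$ and $x=(-100/17,\,130/17)^{T}$: for the first, $Ax=(1,\,20/19)^{T}$ and $B\vert x\vert=(0,\,20/19)^{T}$; for the second, $Ax=(1,\,20/17)^{T}$ and $B\vert x\vert=(0,\,20/17)^{T}$. The only inessential inaccuracy in your sketch is the location of the singularity: here the singular member of the family $\{A-BD'\}$ occurs at the interior matrix $D'=\diag(-1/18,\,0)$, while at the vertices one has $\det(A-B\diag(1,-1))=0.19>0$ and $\det(A-B\diag(-1,1))=-0.17<0$; it is this sign change that produces the two solutions, one on each of the sign patterns $(+,-)$ and $(-,+)$, exactly as you predicted.

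Against the paper: its entire proof of the corollary is the sentence that the result holds ``by definition of the H-matrix and with the help of Theorem~\ref{Thm M-matrix}'', i.e., it implicitly substitutes an H-matrix with positive diagonal for the M-matrix of Theorem~\ref{Thm M-matrix}. That substitution is illegitimate for precisely the reason you named: an H-matrix may have positive off-diagonal entries, and then neither $A-B$ nor $A-BD'$ is a Z-matrix, so no entrywise-domination argument applies. You were also right to be wary of your step 3, because Theorem~\ref{Thm M-matrix} is itself false: with $A=\bigl(\begin{smallmatrix}1 & 5\\ 5 & 1\end{smallmatrix}\bigr)$ and $B=\bigl(\begin{smallmatrix}0 & 5\\ 5 & 0\end{smallmatrix}\bigr)$ one has $B\ge 0$ and $A-B=I_{2}$ (a nonsingular M-matrix), yet $Ax-B\vert x\vert=(-1,-1)^{T}$ has the three solutions $(-1,9)^{T}$, $(9,-1)^{T}$ and $(-1/11,-1/11)^{T}$; the flaw there is the claim that a matrix entrywise greater than or equal to a nonsingular M-matrix is again a nonsingular M-matrix, which fails once positive off-diagonal entries appear --- the same Z-matrix issue. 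Finally, your proposed repair is the correct one: if $\langle A\rangle-\vert B\vert$ is a nonsingular M-matrix (where $\langle A\rangle$ denotes the comparison matrix of $A$), then every $C\in[A-\vert B\vert,\,A+\vert B\vert]$ satisfies $\langle C\rangle\ge\langle A\rangle-\vert B\vert$ with both sides Z-matrices, hence $\langle C\rangle$ is a nonsingular M-matrix and $C$ is invertible; the interval matrix is therefore regular, and unique solvability follows from condition (iii) of Theorem~\ref{Thm gave1i}.
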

\begin{proof}
	By definition of the H-matrix and with the help of the Theorem \ref{Thm M-matrix}~, above result is hold.
\end{proof}

\begin{theorem} \label{Thm positive definite}
	Let matrices $A$ and $B \geq 0$ be symmetric and $A$ is positive definite. Then GAVE (\ref{Equ1}) has a unique solution for any b if $x^{T}Ax - \vert x\vert^{T} B \vert x\vert > 0$ holds for every $x \ne 0$.	
\end{theorem}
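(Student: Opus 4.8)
The plan is to reduce the statement to the regularity criterion of Theorem \ref{Thm gave1i}(iii). Since $B \ge 0$ we have $|B| = B$, so the relevant interval matrix is $[A-B,\,A+B]$, and it suffices to prove that every $C \in [A-B,\,A+B]$ is nonsingular; regularity then delivers a unique solution of GAVE (\ref{Equ1}) for every $b$.

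First I would fix an arbitrary member $C$ of this interval matrix and write $C = A + \Delta$, where $\Delta$ is an entrywise perturbation with $|\Delta_{ij}| \le B_{ij}$ for all $i,j$. The key observation is that although $C$ need not be symmetric, nonsingularity follows as soon as its symmetric part is positive definite, i.e. as soon as $x^{T}Cx > 0$ for every $x \ne 0$ (indeed, if $Cx=0$ then $x^{T}Cx=0$, a contradiction). So the whole argument comes down to estimating the quadratic form $x^{T}Cx = x^{T}Ax + x^{T}\Delta x$ from below.

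The main step is the bound on the perturbation term. Using $|\Delta_{ij}| \le B_{ij}$ and $B \ge 0$ entrywise, I would estimate, for every $x$,
\begin{equation*}
  x^{T}\Delta x = \sum_{i,j} x_i\,\Delta_{ij}\,x_j \ \ge\ -\sum_{i,j} |x_i|\,B_{ij}\,|x_j| \ =\ -\,|x|^{T}B\,|x|,
\end{equation*}
so that $x^{T}Cx \ge x^{T}Ax - |x|^{T}B|x|$. By the hypothesis this lower bound is strictly positive for all $x \ne 0$, hence $x^{T}Cx > 0$ and $C$ is nonsingular. Since $C$ was arbitrary, $[A-B,\,A+B]$ is regular, and Theorem \ref{Thm gave1i}(iii) finishes the proof.

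I expect the main obstacle to be conceptual rather than computational: recognizing that the entrywise bound $|\Delta|\le B$ together with $B\ge 0$ is exactly what is needed to dominate the indefinite cross term $x^{T}\Delta x$ by $-|x|^{T}B|x|$, and that positivity of $x^{T}Cx$ (rather than symmetry of $C$) is the correct notion for certifying nonsingularity of every member of the interval matrix. As a self-contained alternative that bypasses Theorem \ref{Thm gave1i}(iii), I could instead prove injectivity of $F(x)=Ax-B|x|$ directly: if $A(x-y)=B(|x|-|y|)$, then multiplying by $(x-y)^{T}$ and using $B\ge 0$ together with the reverse triangle inequality $\big||x_i|-|y_i|\big|\le |x_i-y_i|$ gives $(x-y)^{T}A(x-y)\le |x-y|^{T}B|x-y|$, contradicting the hypothesis unless $x=y$. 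Surjectivity would then follow from coercivity, since the degree-two homogeneity of $x^{T}Ax-|x|^{T}B|x|$ yields $\|F(x)\|\ge \mu\|x\|$ for some $\mu>0$, making the continuous injective map $F$ proper and hence onto all of $\mathbb{R}^n$.
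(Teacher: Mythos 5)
Your proof is correct, and it reaches the conclusion by the same final reduction as the paper but with a genuinely different middle. Both you and the paper show that the interval matrix $[A-B,A+B]=[A-|B|,A+|B|]$ is regular and then invoke condition (iii) of Theorem \ref{Thm gave1i}. The paper, however, obtains regularity by citing two external results: Theorem 4.3 of \cite{farhadsefat2012note} (the quadratic-form condition makes the \emph{symmetric} interval matrix $[A-B,A+B]$ positive definite) and Theorem 3 of \cite{rohn1994positive} (a positive definite symmetric interval matrix is regular) --- and this is precisely why it carries the hypotheses that $A,B$ be symmetric and $A$ positive definite. You replace both citations with the elementary estimate $x^{T}\Delta x \ge -|x|^{T}B|x|$ valid whenever $|\Delta_{ij}|\le B_{ij}$, concluding that every member $C=A+\Delta$ of the interval matrix satisfies $x^{T}Cx>0$ for $x\ne 0$ and is therefore nonsingular, whether or not $C$ (or $A$, or $B$) is symmetric. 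This buys two things: the argument is self-contained, and it exposes the symmetry of $A$ and $B$ and the positive definiteness of $A$ as redundant hypotheses, since $x^{T}Ax > |x|^{T}B|x| \ge 0$ already forces the symmetric part of $A$ to be positive definite; your version thus proves a slightly stronger theorem requiring only $B\ge 0$ entrywise. One caveat: the alternative injectivity-plus-coercivity route you sketch at the end is sound in outline (the injectivity step via the reverse triangle inequality is complete and correct), but the passage from \quo{continuous, injective, proper} to \quo{surjective} is not free --- it requires invariance of domain or a degree-theoretic argument --- so as written that part is a sketch rather than a full second proof.
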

\begin{proof}
	Since $A$ and $B \geq 0$ are  symmetric, then interval matrix $[A-B, A+B]$ is also symmetric.
	By [\cite{farhadsefat2012note}, Theorem 4.3], if $x^{T}Ax - \vert x\vert^{T} B \vert x\vert > 0$ holds for every $x \ne 0$ then symmetric interval matrix $[A-B, A+B]$ is positive definite. 
	Since A is symmetric positive definite and by [\cite{rohn1994positive}, Theorem 3], if $[A-B, A+B]$ is positive definite then $[A-B, A+B]$ is regular.
	
	Then by condition (iii) of the Theorem \ref{Thm gave1i}~, GAVE \ref{Equ1} has a unique solution.
\end{proof}

\begin{theorem}\label{Thm Main GAVME}
	The conditions of the Theorems (\ref{Thm Main}~, \ref{Thm M-matrix}~, \ref{Thm positive definite}) and Corollary \ref{Thm H-matrix} are also applicable to determine the unique solution of the GAVME $AX - B \vert X \vert = F,$ where $A, B, F \in R^{n \times n}$.
	
\end{theorem}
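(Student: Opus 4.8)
The plan is to reduce the matrix equation $AX - B\vert X\vert = F$ to a family of vector equations of the form \nref{Equ1} by working column by column, and then to invoke the already-established vector results. Write $X = [x_1, \ldots, x_n]$ and $F = [f_1, \ldots, f_n]$ for the columns of $X$ and $F$. Because the absolute value is taken entrywise, the $j$-th column of $\vert X\vert$ is exactly $\vert x_j\vert$; consequently the $j$-th column of $AX$ is $A x_j$, the $j$-th column of $B\vert X\vert$ is $B\vert x_j\vert$, and equating columns turns the single matrix equation into the $n$ independent systems $A x_j - B\vert x_j\vert = f_j$ for $j \in \mathbb{N}$. Each of these is precisely a GAVE \nref{Equ1} with the same coefficient matrices $A$ and $B$ and right-hand side $f_j$.

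The key observation making the argument work is that every hypothesis appearing in Theorems \ref{Thm Main}, \ref{Thm M-matrix}, \ref{Thm positive definite} and Corollary \ref{Thm H-matrix} is a condition on the pair $(A,B)$ alone and, whenever it holds, guarantees that \nref{Equ1} has a unique solution for \emph{every} right-hand side $b$. Thus, assuming any one of these conditions, I would apply the corresponding theorem to each column system $A x_j - B\vert x_j\vert = f_j$ (with $f_j$ playing the role of $b$) to conclude that each column $x_j$ is uniquely determined. Assembling the columns $X = [x_1, \ldots, x_n]$ then yields a unique $X$ solving the GAVME, which is the claim.

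I expect the only point requiring care -- and it is a mild one -- to be the interchange of the entrywise absolute value with the extraction of columns, i.e.\ the fact that the $j$-th column of $\vert X\vert$ equals $\vert x_j\vert$; this is immediate from the componentwise definition of $\vert\cdot\vert$. An equivalent route is to vectorize: setting $y = \vek(X)$ and $g = \vek(F)$, the GAVME becomes the single GAVE $(I_n \otimes A)\,y - (I_n \otimes B)\,\vert y\vert = g$ of order $n^2$, using $\vek(\vert X\vert) = \vert \vek(X)\vert$. One would then verify that each hypothesis lifts from $(A,B)$ to $(I_n\otimes A, I_n\otimes B)$ -- for instance condition \nref{1} and the M-matrix, H-matrix, and symmetric positive-definite properties are all preserved under tensoring with $I_n$, since $I_n\otimes A$ is block diagonal with diagonal blocks $A$. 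The column-wise viewpoint, however, makes this lifting transparent and avoids any extra verification, so I would present that version.
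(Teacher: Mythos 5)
Your proposal is correct and follows essentially the same route as the paper: decompose $X$ and $F$ into columns, observe that the entrywise absolute value commutes with taking columns, reduce the GAVME to the $n$ independent GAVEs $Ax_k - B\vert x_k\vert = f_k$, and apply the vector-level theorems to each one. The alternative vectorization via Kronecker products is a nice aside, but since you ultimately present the column-wise argument, your proof and the paper's coincide.
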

\begin{proof}
	For any matrix $ M = (m_{ij}) \in R^{n \times n},$ can be considered as the row of n numbers column vector $f_{k}$ where $k=1, 2, . . . , n.$ So GAVME can be written as the form of the n numbers of GAVE, i.e.,   $AX - B \vert X \vert = F$ is equivalent to the $Ax_{k} - B \vert x_{k} \vert = f_{k}.$ 
	
	Now, with the help of Theorem \ref{Thm Main}~, Theorem \ref{Thm M-matrix}~, Theorem \ref{Thm positive definite}) and Corollary \ref{Thm H-matrix}, GAVE $Ax_{k} - B \vert x_{k} \vert = f_{k}$ has a unique solution for all $f_{k}$. This completes the proof.
\end{proof}

\begin{remark}
	The conditions $\sigma_{min}(A) > \sigma_{max}( \vert B \vert)$ \cite{dehghan2020matrix} and $\sigma_{min}(A) > \sigma_{max}(B)$ \cite{xie2021unique} satisfied then GAVME has a unique solution for any $F \in R^{n \times n}$. Still, in some instances, these two conditions are not satisfied, while GAVME has a unique solution. Note that, here, condition (\ref{1}) of Theorem \ref{Thm Main GAVME} is satisfied.
\end{remark}
\begin{example} \label{Ex01}
	Consider the following GAVME
	\begin{equation*}
		\begin{bmatrix}
			2.2 &  0\\
			0  &  2.9
		\end{bmatrix}
		\begin{bmatrix}
			x_1 & x_2 \\
			x_3 & x_4
		\end{bmatrix}
		-
		\begin{bmatrix}
			-1 &  1 \\
			-1 & 1.5
		\end{bmatrix}
		\begin{bmatrix}
			|x_1| & |x_2| \\
			|x_3| & |x_4|
		\end{bmatrix}
		=
		\begin{bmatrix}
			2.6 & -10.4 \\
			-7.4 & -19.6
		\end{bmatrix}.
	\end{equation*}	
	
	The GAVME has a unique solution 
	X=
	\begin{equation*}
		\begin{bmatrix}
			3 &  -2\\
			-1  & -4
		\end{bmatrix}.
	\end{equation*}
\end{example} 
But conditions of \cite{dehghan2020matrix,xie2021unique} are not satisfying, since $\sigma_{min}(A)=2.2 \ngtr 2.2808 = \sigma_{max}(B)=  \sigma_{max}( \vert B \vert) $. Clearly, condition (\ref{1}) of Theorem \ref{Thm Main GAVME} is satisfying as follows $\vert 2.2 \vert> \vert -1 \vert +\vert 1 \vert + 0 = 2$ and $\vert 2.9 \vert > \vert -1 \vert +\vert 1.5 \vert + 0 = 2.5$.


\begin{theorem}\label{Thm No sol} 
	The GAVE (\ref{Equ1}) has no solution, if for non-singular matrix B,  $0 \ne B^{-1}b \geq 0$ and
	\begin{equation} \label{11111}
		\sigma_{max}(A) < \sigma_{min}(B).
	\end{equation}
\end{theorem}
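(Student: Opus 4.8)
The plan is to argue by contradiction. Suppose, to the contrary, that GAVE \ref{Equ1} admits a solution $x$. Since $B$ is non-singular, I would left-multiply the identity $Ax - B|x| = b$ by $B^{-1}$ to obtain the equivalent relation $B^{-1}Ax = |x| + B^{-1}b$. Before comparing norms, I would first dispose of the degenerate case $x = 0$: substituting $x=0$ forces $B^{-1}b = 0$, which contradicts the standing hypothesis $B^{-1}b \neq 0$. Hence any putative solution must satisfy $x \neq 0$, and in particular $\|x\|_2 > 0$.

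The core of the argument is to estimate the Euclidean norms of the two sides of $B^{-1}Ax = |x| + B^{-1}b$ in opposite directions. For the right-hand side, the decisive point is that $|x| \geq 0$ and $B^{-1}b \geq 0$ are both componentwise non-negative, so their inner product is non-negative; expanding $\||x| + B^{-1}b\|_2^2 = \||x|\|_2^2 + 2\langle |x|, B^{-1}b\rangle + \|B^{-1}b\|_2^2$ then yields the strict lower bound $\||x| + B^{-1}b\|_2 > \||x|\|_2 = \|x\|_2$, where strictness is guaranteed by $\|B^{-1}b\|_2 > 0$ (since $B^{-1}b \neq 0$) and where I use $\||x|\|_2 = \|x\|_2$. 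For the left-hand side, the standard singular-value bounds give the upper estimate $\|B^{-1}Ax\|_2 \leq \|B^{-1}\|_2\,\|A\|_2\,\|x\|_2 = \frac{\sigma_{max}(A)}{\sigma_{min}(B)}\,\|x\|_2$, using $\|A\|_2 = \sigma_{max}(A)$ and $\|B^{-1}\|_2 = 1/\sigma_{min}(B)$.

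Combining the two estimates with the hypothesis $\sigma_{max}(A) < \sigma_{min}(B)$ and with $x \neq 0$ produces the chain $\|x\|_2 < \||x| + B^{-1}b\|_2 = \|B^{-1}Ax\|_2 \leq \frac{\sigma_{max}(A)}{\sigma_{min}(B)}\,\|x\|_2 < \|x\|_2$, which is impossible. This contradiction shows that no $x$ can satisfy \ref{Equ1}, completing the proof.

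The step I expect to be the main obstacle is justifying the strict lower bound on the right-hand side, because this is exactly where the sign hypothesis $0 \neq B^{-1}b \geq 0$ is essential: without componentwise non-negativity the cross term $\langle |x|, B^{-1}b\rangle$ could be negative and the inequality $\||x| + B^{-1}b\|_2 > \|x\|_2$ would break down. I would also be careful that the two strict inequalities in the final chain are simultaneously valid — the rightmost one requires $x \neq 0$, which is supplied by the preliminary degenerate-case analysis, while the leftmost one is strict for \emph{every} $x$ precisely because $B^{-1}b \neq 0$.
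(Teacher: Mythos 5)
Your proof is correct and takes essentially the same route as the paper's: left-multiply by $B^{-1}$, use the componentwise nonnegativity of $|x|$ and $B^{-1}b$ to bound $\Vert B^{-1}Ax \Vert_2$ from below by $\Vert x \Vert_2$, and contradict this with the submultiplicative upper bound $\Vert B^{-1}\Vert_2 \, \Vert A \Vert_2 \, \Vert x \Vert_2 = \frac{\sigma_{max}(A)}{\sigma_{min}(B)}\Vert x \Vert_2 < \Vert x \Vert_2$. If anything, your version is slightly more careful than the paper's: you explicitly dispose of the case $x=0$ (the paper just posits a non-zero solution) and you extract a strict lower bound via the inner-product expansion, whereas the paper settles for the non-strict componentwise comparison $B^{-1}Ax \geq |x|$ and lets the singular-value hypothesis supply the single strict inequality needed for the contradiction.
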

\begin{proof}  Since $\vert \vert A \vert \vert_{2} = \sigma_{max}(A)$ and  $\vert \vert B^{-1} \vert \vert_{2} = 1/\sigma_{min}(B).$ \\
	Let's assume that GAVE (\ref{Equ1}) has a non-zero solution x. The GAVE can be written as
	$B^{-1}Ax - \vert x \vert = B^{-1}b$. Since  $B^{-1}b \geq 0,$ so 	$B^{-1}Ax \geq \vert x \vert.$ Now taking the 2-norm, we get $ \vert \vert x \vert \vert_{2} \leq \vert \vert B^{-1}Ax \vert \vert_{2} \leq \vert \vert B^{-1} \vert \vert_{2} . \vert \vert A \vert \vert_{2} . \vert \vert x \vert \vert_{2} < \vert \vert x \vert \vert_{2}.$
	This is a contradiction, so GAVE (\ref{Equ1}) has no solution.	
\end{proof}

We have the following consequences of the Theorem \ref{Thm No sol}.
\begin{proposition}\label{Thm No sol 2} 
	The GAVE (\ref{Equ1}) has no solution if either of the following conditions holds.\\
	(i) If 0 is not an eigenvalue of A and B, $0 \ne B^{-1}b \geq 0$ and 
	\begin{equation} \label{11}
		\sigma_{min}(A^{-1}B) > 1.
	\end{equation}
	(ii)  If 0 is not an eigenvalue of B, $0 \ne B^{-1}b \geq 0$ and 
	\begin{equation} \label{12}
		\sigma_{max}(B^{-1}A) < 1.
	\end{equation}
\end{proposition}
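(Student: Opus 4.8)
The plan is to establish both parts by the same contradiction device employed in Theorem \ref{Thm No sol}, but sharpening the norm estimate so that the product $\vert\vert B^{-1}\vert\vert_2 \, \vert\vert A \vert\vert_2$ is replaced by the tighter single quantity $\vert\vert B^{-1}A\vert\vert_2 = \sigma_{max}(B^{-1}A)$. Throughout, the hypothesis $0 \ne B^{-1}b \ge 0$ is used exactly as before: premultiplying the GAVE (\ref{Equ1}) by $B^{-1}$ gives $B^{-1}Ax - \vert x \vert = B^{-1}b \ge 0$, whence $B^{-1}Ax \ge \vert x \vert$ for any putative solution $x$.

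I would start with part (ii), which is the more primitive statement. Assuming for contradiction that the GAVE admits a nonzero solution $x$, I take the Euclidean norm of the componentwise inequality $B^{-1}Ax \ge \vert x \vert \ge 0$ to obtain $\vert\vert x \vert\vert_2 \le \vert\vert B^{-1}Ax \vert\vert_2$. Then, applying submultiplicativity in the form $\vert\vert B^{-1}Ax\vert\vert_2 \le \vert\vert B^{-1}A\vert\vert_2\,\vert\vert x\vert\vert_2 = \sigma_{max}(B^{-1}A)\,\vert\vert x\vert\vert_2$ and invoking the hypothesis (\ref{12}) that $\sigma_{max}(B^{-1}A) < 1$ yields $\vert\vert x \vert\vert_2 < \vert\vert x \vert\vert_2$, a contradiction. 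Here the only structural requirement is that $B$ be invertible, which is precisely the assumption that $0$ is not an eigenvalue of $B$; note that $A$ is never inverted in this argument, so no condition on $A$ is forced at this stage.

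For part (i) I would reduce condition (\ref{11}) to condition (\ref{12}) via the standard reciprocity of extremal singular values of a matrix and its inverse. When $A$ and $B$ are both invertible, $A^{-1}B$ is invertible with inverse $B^{-1}A$, and $\sigma_{min}(M) = 1/\sigma_{max}(M^{-1})$ for any invertible $M$; taking $M = A^{-1}B$ gives $\sigma_{min}(A^{-1}B) = 1/\sigma_{max}(B^{-1}A)$. Consequently $\sigma_{min}(A^{-1}B) > 1$ holds if and only if $\sigma_{max}(B^{-1}A) < 1$, so (i) is exactly (ii) under the additional invertibility of $A$ — which explains why (i) carries the extra hypothesis that $0$ is not an eigenvalue of $A$ as well as of $B$.

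The step I expect to require the most care is the reciprocity identity $\sigma_{min}(A^{-1}B) = 1/\sigma_{max}(B^{-1}A)$ together with the bookkeeping of exactly which matrices must be invertible for it to be meaningful. It is also worth flagging that these conditions are genuinely weaker than the hypothesis $\sigma_{max}(A) < \sigma_{min}(B)$ of Theorem \ref{Thm No sol}: submultiplicativity gives $\sigma_{max}(B^{-1}A) \le \vert\vert B^{-1}\vert\vert_2\,\vert\vert A\vert\vert_2 = \sigma_{max}(A)/\sigma_{min}(B)$, so the Theorem's hypothesis implies (\ref{12}) but not conversely. Thus the present proposition is best viewed as a refinement obtained by mimicking the argument of Theorem \ref{Thm No sol} rather than as a formal logical corollary of its statement.
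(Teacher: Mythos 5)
Your proof is correct, but it takes a genuinely different route from the paper --- and yours is the sound one. The paper presents the proposition as a consequence of Theorem \ref{Thm No sol}: it proves $\sigma_{max}(A) < \sigma_{min}(B) \implies \sigma_{min}(A^{-1}B) > 1$ (via $\sigma_{min}(A^{-1}B) \geq \sigma_{min}(A^{-1})\,\sigma_{min}(B)$) and $\sigma_{max}(A) < \sigma_{min}(B) \implies \sigma_{max}(B^{-1}A) < 1$ (via submultiplicativity), and then cites Theorem \ref{Thm No sol} to conclude. These implications run in the wrong direction: they show that the hypothesis (\ref{11111}) of Theorem \ref{Thm No sol} implies conditions (\ref{11}) and (\ref{12}), i.e.\ that the new conditions are \emph{weaker} than the old one, so the proposition cannot be deduced from the theorem's statement by this chain; your own remark makes the strictness concrete (for instance $A = \diag(1,10)$, $B = \diag(2,20)$ satisfies (\ref{11}) and (\ref{12}) but violates (\ref{11111})). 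Your argument closes exactly this gap: you rerun the contradiction device of Theorem \ref{Thm No sol} with the tighter estimate $\Vert B^{-1}Ax \Vert_2 \leq \sigma_{max}(B^{-1}A)\,\Vert x \Vert_2$ in place of $\Vert B^{-1}\Vert_2\,\Vert A \Vert_2\,\Vert x\Vert_2$, which proves (ii) using only the invertibility of $B$, and you obtain (i) from (ii) through the reciprocity $\sigma_{min}(A^{-1}B) = 1/\sigma_{max}(B^{-1}A)$, which is precisely where the invertibility of $A$ enters. The one point worth adding (a blemish your write-up shares with the paper's proof of Theorem \ref{Thm No sol}) is to dispose of the zero solution explicitly: $x = 0$ would force $b = 0$, contradicting $B^{-1}b \neq 0$, so restricting the contradiction argument to nonzero solutions is legitimate. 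In short, what your approach buys is a valid, self-contained proof; what the paper's approach was meant to buy --- getting the result for free from Theorem \ref{Thm No sol} --- is not actually available, and your closing observation that the proposition is a refinement obtained by mimicking the theorem's argument rather than a formal corollary of its statement pinpoints exactly the flaw in the paper's own proof.
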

\begin{proof}
	By singular value properties of matrix, we have $\sigma_{max}(A) < \sigma_{min}(B)$  $\implies$ $1/\sigma_{min}(A^{-1}) < \sigma_{min}(B)$ $\implies$ $1 < \sigma_{min}(A^{-1}).\sigma_{min}(B)$ $\leq$ $\sigma_{min}(A^{-1}B).$ By Theorem \ref{Thm No sol} if $1 < \sigma_{min}(A^{-1}B)$ then GAVE (\ref{Equ1}) has no solution.\\
	Similar way, $\sigma_{max}(A) < \sigma_{min}(B)$  $\implies$  $\sigma_{max}(B^{-1})\sigma_{max}(A) < 1$ and for any two matrices  $A, B \in R^{n \times n},$ we have $\sigma_{max}(AB) \leq \sigma_{max}(A).\sigma_{max}(B) \cite{horn2012matrix}.$ Combine the last two inequalities, we have $\sigma_{max}(B^{-1}A) < 1$. Then by Theorem \ref{Thm No sol}, GAVE (\ref{Equ1}) has no solution.
\end{proof}

In support of our conditions, we are considering the following simple example.
\begin{example} \label{Ex No Sol}
	Let
	\begin{equation*}
		A =
		\begin{bmatrix}
			-3 &  2\\
			-5  &  1
		\end{bmatrix},
		~	B =
		\begin{bmatrix}
			-1 &  7\\
			8  &  2
		\end{bmatrix},
		~ and ~	b =
		\begin{bmatrix}
			1 \\
			5  
		\end{bmatrix}.
	\end{equation*}
	Clearly, 
	\begin{equation*}
		0 \ne	 B^{-1}b =
		\begin{bmatrix}
			0.5690 \\
			0.2241
		\end{bmatrix}
		\geq 0.
	\end{equation*}
	Further,\\
	\noindent (i) $\sigma_{max}(A) = 6.1401 < 6.9414 = \sigma_{min}(B).$ \\
	(ii) $\sigma_{max}(B^{-1}A) =  0.7501 < 1.$ \\
	(iii) $\sigma_{min}(A^{-1}B) = 1.3331 > 1.$	\\
	So, GAVE $Ax - B \vert x \vert = b = [1 ~~ 5]^{T}$ does not has a unique solution.
\end{example}


\section{Some Remarks on the Absolute Value Equations}
In this section, we have examined the conditions that lead to the unique solvability of the AVE. 

\noindent First, we present the following necessary and sufficient condition for the AVE.
\begin{theorem} \label{Thm ave0}
	Let A be a symmetric matrix. The AVE (\ref{Equ2}) has a unique solution if and only if $A-I_{n}$ and $A+I_{n}$ have the same signature.
\end{theorem}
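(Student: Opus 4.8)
The plan is to reduce the statement to the regularity of the symmetric interval matrix $[A - I_{n}, A + I_{n}]$ and then to analyze that interval through the eigenvalues of the diagonally perturbed matrices $A - D$. By Rohn's theorem \cite{rohn2004theorem}, which is in fact an equivalence (condition (iii) of Theorem \ref{Thm gave1i} specialized to $B = I_{n}$, so that $|B| = I_{n}$), the AVE (\ref{Equ2}) is uniquely solvable for every $b$ if and only if $[A - I_{n}, A + I_{n}]$ is regular. Because the radius here is the identity, a matrix lies in this interval precisely when it has the form $A - D$ with $D = \diag(d_{1}, \dots, d_{n})$ and $d_{i} \in [-1,1]$; thus regularity means that $A - D$ is nonsingular for every such diagonal $D$. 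Since $A$ is symmetric, every $A - D$ is symmetric, so nonsingularity is equivalent to the absence of a zero eigenvalue, and the whole problem becomes a statement about how the eigenvalues move as $D$ ranges over the box $[-1,1]^{n}$.

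For the direction \emph{uniquely solvable $\Rightarrow$ same signature}, I would restrict to the one-parameter family $C(t) = A - tI_{n}$, $t \in [-1,1]$, which stays inside the interval. If the interval is regular, each $C(t)$ is nonsingular, so no eigenvalue of the continuous symmetric family $C(t)$ can cross $0$; hence the inertia of $C(t)$ is constant on $[-1,1]$. Evaluating at the endpoints $C(-1) = A + I_{n}$ and $C(1) = A - I_{n}$ shows that $A - I_{n}$ and $A + I_{n}$ share the same signature. Note that for this direction the single line suffices, since regularity of the full box trivially forces nonsingularity along it.

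The converse is the main obstacle, because there I must control the \emph{entire} box of diagonal perturbations, not just the line $A - tI_{n}$. The key tool is the Loewner order: for $|d_{i}| \le 1$ one has $A - I_{n} \preceq A - D \preceq A + I_{n}$, since $(A + I_{n}) - (A - D) = I_{n} + D \succeq 0$ and $(A - D) - (A - I_{n}) = I_{n} - D \succeq 0$. Weyl's monotonicity theorem then yields, for the increasingly ordered eigenvalues, $\lambda_{k}(A - I_{n}) \le \lambda_{k}(A - D) \le \lambda_{k}(A + I_{n})$ for every index $k$ and every admissible $D$. Assume now that $A - I_{n}$ and $A + I_{n}$ have a common signature $(n_{+}, n_{-}, n_{0})$. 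Applying Weyl to $(A + I_{n}) - (A - I_{n}) = 2I_{n} \succ 0$ gives the strict gap $\lambda_{k}(A + I_{n}) \ge \lambda_{k}(A - I_{n}) + 2$ for all $k$; if $n_{0} \ge 1$ this is violated at $k = n_{-}+1$, where both eigenvalues would equal $0$, a contradiction. Hence $n_{0} = 0$, both corner matrices are nonsingular, and for an arbitrary admissible $D$ the sandwiching inequalities force $\lambda_{k}(A - D) \le \lambda_{k}(A + I_{n}) < 0$ for $k \le n_{-}$ and $\lambda_{k}(A - D) \ge \lambda_{k}(A - I_{n}) > 0$ for $k > n_{-}$. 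Thus $A - D$ has no zero eigenvalue, the interval is regular, and by Theorem \ref{Thm gave1i}(iii) the AVE (\ref{Equ2}) is uniquely solvable for every $b$. The point I expect to need the most care is precisely this elimination of the zero-inertia case: \quo{same signature} delivers regularity only because a common zero-eigenvalue multiplicity is incompatible with the forced eigenvalue gap of $2$ between $A - I_{n}$ and $A + I_{n}$.
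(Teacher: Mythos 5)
Your proof is correct, and for the core of the theorem it takes a genuinely different route from the paper. The paper's proof is entirely by citation: it first invokes the known equivalence between unique solvability of the AVE and regularity of $[A-I_{n},A+I_{n}]$ (as you do, and your claim that this is a full equivalence rather than just a sufficient condition is justified --- it is exactly Rohn's alternative theorem specialized to $B=I_{n}$, also cited by the paper via \cite{wu2018unique,zhang2009global}), and then it simply cites Proposition 2.5 of \cite{hladik2023properties}, a general result stating that a symmetric interval matrix is regular if and only if its endpoint matrices have the same signature. You instead prove that second equivalence from scratch for the particular interval at hand: you observe that because the radius is $I_{n}$, the members of the interval are precisely the matrices $A-D$ with $D$ diagonal and $d_{i}\in[-1,1]$ (hence all symmetric, so the usual subtlety that regularity concerns non-symmetric members too never arises), you get one direction by constancy of inertia along the line $A-tI_{n}$, and the converse by the Loewner sandwich $A-I_{n}\preceq A-D\preceq A+I_{n}$ together with Weyl monotonicity, after ruling out common zero eigenvalues via the forced gap $\lambda_{k}(A+I_{n})=\lambda_{k}(A-I_{n})+2$. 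What your approach buys is self-containedness and transparency: the reader sees exactly why the result is elementary in this special case, with no appeal to the general theory of symmetric interval matrices. What the paper's approach buys is brevity, and an argument that sits inside a framework (Hladík's) covering general symmetric radii $\Delta$, where the interval contains non-symmetric matrices and the sandwich argument alone would not suffice.
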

\begin{proof}
	Since, AVE (\ref{Equ2}) has a unique solution if and only if interval matrix $[A-I_{n}, A+I_{n}]$ is regular \cite{wu2018unique,zhang2009global}. By Proposition 2.5 of \cite{hladik2023properties}, interval matrix $[A-I_{n}, A+I_{n}]$ is regular if and only if $A-I_{n}$ and $A+I_{n}$ have the same signature. This complete the proof.
\end{proof}


\begin{theorem}\label{Thm ave1} 
	If either of the following conditions satisfy, then AVE (\ref{Equ2}) has exactly one solution:\\
	(i)  \cite{mangasarian2006absolute} $1 < \sigma_{min}(A)$;\\
	(ii) \cite{wu2021unique} $2 <  \sigma_{min}(A+I_{n});$ \\	
	(iii)\cite{zhang2009global} interval matrix $\mathbb{I_{A}} = [A-I_{n},A+I_{n}]$ is regular;\\
	(iv)  \cite{wu2018unique} $A-I_{n}+2D$ is invertible for each $D \in [0,I_{n}];$ \\
	(v)  \cite{wu2018unique} $A+I_{n}-2D$ is invertible for each $D \in [0,I_{n}].$
\end{theorem}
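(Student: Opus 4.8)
The plan is to treat all five items as specializations of the general GAVE theory already recorded in Theorems \ref{Thm gave1} and \ref{Thm gave1i}, taking $B = I_n$ so that the GAVE (\ref{Equ1}) becomes the AVE (\ref{Equ2}) and $|B| = I_n$. The central object is item (iii): since $|B| = I_n$, the interval matrix $[A - |B|, A + |B|]$ coincides with $[A - I_n, A + I_n]$, and condition (iii) of Theorem \ref{Thm gave1i} (Rohn's regularity criterion) immediately yields unique solvability. I would therefore prove the theorem by showing that each of (i), (ii), (iv), (v) either is a direct instance of the cited criteria or funnels down to the regularity of $[A - I_n, A + I_n]$.

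For item (i), I would observe that $\sigma_{max}(|B|) = \sigma_{max}(I_n) = 1$, so the hypothesis $1 < \sigma_{min}(A)$ is exactly the inequality $\sigma_{max}(|B|) < \sigma_{min}(A)$ of Theorem \ref{Thm gave1}(i); unique solvability follows at once. For items (iv) and (v), I would use the affine reparametrization of the diagonal interval: as $D$ runs over $[0, I_n]$, the matrix $A - I_n + 2D$ runs over $\{A + \bar{\Lambda} : \bar{\Lambda} \in [-I_n, I_n]\}$ via $\bar{\Lambda} = 2D - I_n$, and likewise $A + I_n - 2D$ via $\bar{\Lambda} = I_n - 2D$. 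Hence (iv) and (v) are each equivalent to requiring that $A + B\bar{\Lambda} = A + \bar{\Lambda}$ be invertible for every $\bar{\Lambda} \in [-I_n, I_n]$, which is precisely condition (ii) of Theorem \ref{Thm gave1i} with $B = I_n$, and therefore again gives unique solvability.

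The one item requiring an estimate is (ii). Here I would rewrite $A - I_n + 2D = (A + I_n) - 2(I_n - D)$ and note that for $D \in [0, I_n]$ the diagonal matrix $I_n - D$ has entries in $[0,1]$, so $\sigma_{max}\bigl(2(I_n - D)\bigr) \le 2$. Using the standard singular value perturbation bound $\sigma_{min}(M - N) \ge \sigma_{min}(M) - \sigma_{max}(N)$ with $M = A + I_n$ and $N = 2(I_n - D)$, the hypothesis $2 < \sigma_{min}(A + I_n)$ forces $\sigma_{min}\bigl((A+I_n) - 2(I_n - D)\bigr) > 0$. Thus $A - I_n + 2D$ is invertible for every $D \in [0, I_n]$, i.e. (ii) implies (iv), and unique solvability follows through the chain established above.

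The main obstacle is not any single computation but organizing the reductions so that every hypothesis is channeled to the regularity of $[A - I_n, A + I_n]$: the delicate points are applying the singular value perturbation inequality correctly in (ii) and verifying that the affine maps $\bar{\Lambda} = 2D - I_n$ and $\bar{\Lambda} = I_n - 2D$ carry $[0, I_n]$ bijectively onto $[-I_n, I_n]$, so that (iv) and (v) genuinely match Theorem \ref{Thm gave1i}(ii). Once these are in place, each item collapses to an already-established criterion.
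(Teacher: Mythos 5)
Your proposal is correct, but it takes a genuinely different route from the paper, because the paper gives no proof of this theorem at all: each of the five items is stated with a citation to its original source, so the paper's justification is purely by reference. You instead derive every item from the paper's own GAVE results by specializing $B = I_n$: item (i) becomes $\sigma_{max}(\vert B \vert) = 1 < \sigma_{min}(A)$, i.e.\ Theorem \ref{Thm gave1}(i); item (iii) is Theorem \ref{Thm gave1i}(iii) verbatim since $\vert B\vert = I_n$; and items (iv)--(v) match Theorem \ref{Thm gave1i}(ii) through the affine bijections $\bar{\Lambda} = 2D - I_n$ and $\bar{\Lambda} = I_n - 2D$, which do carry $[0,I_n]$ onto $[-I_n,I_n]$ (note both interval matrices consist only of diagonal matrices, since the bounds force all off-diagonal entries to vanish, so no ambiguity arises between the ``diagonal'' and ``interval'' readings). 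Your treatment of (ii) via the Weyl bound $\sigma_{min}(M-N) \ge \sigma_{min}(M) - \sigma_{max}(N)$, yielding (ii) $\Rightarrow$ (iv), is sound; it is a close cousin of the paper's own Proposition \ref{Thm ave1ii}, which instead shows (ii) $\Rightarrow$ (i) by the analogous estimate $\sigma_{min}(A+I_n) \le \sigma_{min}(A) + 1$, so both reductions of (ii) are available. What your approach buys is a unified, internally self-contained argument: all five AVE conditions are exhibited as specializations of the GAVE criteria, which also makes transparent the paper's subsequent remark that (iii), (iv) and (v) are equivalent. What the paper's citation approach buys is brevity, and the honest acknowledgment that the underlying facts (including Theorems \ref{Thm gave1} and \ref{Thm gave1i} your argument leans on) are themselves quoted from the literature without proof, so ultimately both routes rest on the same external foundations.
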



\begin{remark}
	By Proposition 2.1 of \cite{zhang2009global} and Theorems 3.2-3.3 of \cite{wu2018unique}, the conditions (iii), (iv) and (v) of the Theorem \ref{Thm ave1} are equivalent.
	In Theorem \ref{Thm ave1}, the condition (i) implies the condition (iii), see \cite{rex1998sufficient} but the converse need not be true, see Exa. 2.3 in \cite{zhang2009global}. Condition (ii) implies condition (i) but is not necessarily converse; see Proposition \ref{Cor ave} and Proposition \ref{Thm ave1ii}.
	
\end{remark}


In the Exa. (\ref{Ex2}), AVE (\ref{Equ2}) has a unique solution which is confirmed by the condition (i) of the Theorem \ref{Thm ave1}. In contrast, condition (ii) of the Theorem \ref{Thm ave1} is insufficient to determine the unique solvability of (\ref{Equ2}). These conditions may be revised in the future.
\begin{example} \label{Ex2}
	Let
	\begin{equation*}
		A =
		\begin{bmatrix}
			1.4878 &  -0.1142\\
			0.3203  &  -1.1589
		\end{bmatrix}.
	\end{equation*}
	Here $\sigma_{min}(A)= 1.0549>1,$ but $\sigma_{min}(A+I_{n})=0.1428 \ngtr 2$ and $\vert -1.1589 \vert  \ngtr 1 + 0.3203.$ 
\end{example}

Wu et al. \cite{wu2021unique} show that the condition $2 <  \sigma_{min}(A+I_{n})$ is slighter stronger than the condition $1 < \sigma_{min}(A)$, but this fact is not correct. From Exa. \ref{Ex2}, we can make the following assumption.
\begin{proposition} \label{Cor ave}
	If the condition $\sigma_{min}(A) > 1$ is satisfy then condition $\sigma_{min}(A+I_{n}) > 2$ not necessarily hold for matrix $A \in R^{n \times n}$.
\end{proposition}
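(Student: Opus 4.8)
The plan is to recognize that this is a refutation statement: it asserts that the implication ``$\sigma_{min}(A) > 1 \Rightarrow \sigma_{min}(A+I_n) > 2$'' fails in general, so it suffices to exhibit a single matrix $A$ for which the hypothesis holds but the conclusion does not. The natural candidate is precisely the matrix $A$ already displayed in Example \ref{Ex2}. First I would verify the hypothesis by computing $\sigma_{min}(A)$ as the square root of the smallest eigenvalue of $A^{T}A$, confirming $\sigma_{min}(A) = 1.0549 > 1$. Then I would form $A+I_{n}$ and compute its smallest singular value, obtaining $\sigma_{min}(A+I_{n}) = 0.1428$, which clearly violates $\sigma_{min}(A+I_{n}) > 2$. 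This single instance establishes the proposition.

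To explain why such counterexamples are not accidental, I would invoke the standard singular-value perturbation inequality: for any $A$, the Weyl-type bound gives $|\sigma_{min}(A+I_{n}) - \sigma_{min}(A)| \le \sigma_{max}(I_{n}) = 1$, so that $\sigma_{min}(A+I_{n})$ can lie anywhere in the interval $[\sigma_{min}(A) - 1,\ \sigma_{min}(A) + 1]$. When $\sigma_{min}(A)$ is only slightly larger than $1$, the lower end of this interval is close to $0$, and by orienting $A$ so that the direction realizing $\sigma_{min}(A)$ is effectively cancelled rather than reinforced by the shift $+I_{n}$, the value $\sigma_{min}(A+I_{n})$ is driven far below $2$. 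This perturbation picture shows that the numbers in Example \ref{Ex2} are representative rather than special, and it directly contradicts the claim of \cite{wu2021unique} that condition (ii) of Theorem \ref{Thm ave1} is only slightly stronger than condition (i).

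There is essentially no analytic obstacle here, since the statement is existential and is settled by one explicit example; the only care required is the honest numerical (or exact) verification that both computed quantities are correct, namely that $\sigma_{min}(A) > 1$ while $\sigma_{min}(A+I_{n}) < 2$. Accordingly, I would keep the proof short: cite Example \ref{Ex2} as the witness, record the two singular-value computations, and conclude that $\sigma_{min}(A) > 1$ does not force $\sigma_{min}(A+I_{n}) > 2$.
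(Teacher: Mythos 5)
Your proposal is correct and follows essentially the same route as the paper, which justifies Proposition \ref{Cor ave} precisely by pointing to the matrix of Example \ref{Ex2}, where $\sigma_{min}(A)=1.0549>1$ while $\sigma_{min}(A+I_{n})=0.1428\not>2$. Your additional Weyl-type perturbation remark, $\lvert\sigma_{min}(A+I_{n})-\sigma_{min}(A)\rvert\le\Vert I_{n}\Vert_{2}=1$, is sound and gives useful intuition for why such counterexamples abound, but it is supplementary to the argument the paper itself uses.
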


\begin{proposition} \label{Thm ave1ii}
	If the condition $\sigma_{min}(A+I_{n}) > 2$ is satisfy then the condition $\sigma_{min}(A) > 1$ also satisfy.
\end{proposition}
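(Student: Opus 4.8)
The plan is to exploit the variational (min–max) characterization of the smallest singular value of a square matrix, namely $\sigma_{min}(M) = \min_{\|x\|_2 = 1}\|Mx\|_2$, together with a one-line triangle-inequality argument. The whole statement is really just the special case $E = I_n$ of the standard singular-value perturbation bound $|\sigma_{min}(A+E) - \sigma_{min}(A)| \le \sigma_{max}(E) = \|E\|_2$, but since only a single direction of this bound is needed here, I would give the direct elementary argument rather than invoking the general perturbation theorem.

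First I would fix an arbitrary unit vector $x \in \mathbb{R}^n$ with $\|x\|_2 = 1$ and apply the triangle inequality in the form $\|(A+I_n)x\|_2 \le \|Ax\|_2 + \|x\|_2 = \|Ax\|_2 + 1$, which rearranges to $\|Ax\|_2 \ge \|(A+I_n)x\|_2 - 1$. Next I would bound the right-hand side below using the variational characterization: $\|(A+I_n)x\|_2 \ge \sigma_{min}(A+I_n)$ for every unit $x$, so that $\|Ax\|_2 \ge \sigma_{min}(A+I_n) - 1$ holds uniformly over all unit vectors $x$.

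Taking the minimum of the left-hand side over all unit vectors then yields the key inequality $\sigma_{min}(A) = \min_{\|x\|_2=1}\|Ax\|_2 \ge \sigma_{min}(A+I_n) - 1$. Finally, substituting the hypothesis $\sigma_{min}(A+I_n) > 2$ gives $\sigma_{min}(A) > 2 - 1 = 1$, which is exactly the desired conclusion, and the proof is complete.

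There is no substantial obstacle here: the argument is a single clean application of the triangle inequality combined with the defining minimization for $\sigma_{min}$. The only point meriting care is that the characterization $\sigma_{min}(M) = \min_{\|x\|_2=1}\|Mx\|_2$ is valid as stated for a square matrix $M$ (so that $\sigma_{min}$ genuinely equals this minimum rather than merely bounding it), and that $\|I_n\|_2 = \sigma_{max}(I_n) = 1$ supplies the constant $1$; both are immediate. It is also worth remarking that this inequality is one-directional and, as Proposition \ref{Cor ave} and Example \ref{Ex2} show, cannot be reversed, which is precisely why the converse implication fails.
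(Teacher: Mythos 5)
Your proof is correct, and it takes a genuinely sounder route than the paper's. The paper derives the same inequality $\sigma_{min}(A) \geq \sigma_{min}(A+I_{n}) - 1$ by writing $\sigma_{min}(A+I_{n}) \leq \sigma_{min}(A) + \sigma_{min}(I_{n})$, i.e.\ by invoking what reads as subadditivity of the smallest singular value. As a general matrix inequality this is false: for $M = \diag(1,0)$ and $N = \diag(0,1)$ one has $\sigma_{min}(M) = \sigma_{min}(N) = 0$ while $\sigma_{min}(M+N) = \sigma_{min}(I_{2}) = 1$. The paper's step is rescued only by the coincidence that $\sigma_{min}(I_{n}) = \sigma_{max}(I_{n}) = 1$, so the number it writes happens to agree with the correct perturbation bound $\sigma_{min}(M+N) \leq \sigma_{min}(M) + \sigma_{max}(N)$. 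Your argument --- fixing a unit vector $x$, applying the triangle inequality $\|(A+I_{n})x\|_{2} \leq \|Ax\|_{2} + 1$, bounding below by $\sigma_{min}(A+I_{n})$, and then minimizing over unit vectors --- is exactly the elementary derivation of that correct bound with $N = I_{n}$, so it establishes the same conclusion while avoiding the unjustified (indeed generally false) intermediate inequality. In that sense your proof is the natural repair one would make to render the paper's argument airtight, and your closing remark that the bound is one-directional (consistent with Proposition \ref{Cor ave} and Example \ref{Ex2}) is also accurate.
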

\begin{proof}
	Let $\sigma_{min}(A+I_{n}) > 2$ is hold, then	
	\begin{align*}
		2 & < \sigma_{min}(A+I_{n}) &\\
		& \leq \sigma_{min}(A) + \sigma_{min}(I_{n}) & \\
		& = \sigma_{min}(A) + 1 &\\
		& \implies \sigma_{min}(A) >1.
	\end{align*}
\end{proof}

Proposition \ref{Cor ave} and Proposition \ref{Thm ave1ii} establish a relationship between the conditions (i) and (ii) of the Theorem \ref{Thm ave1} for the given problem. Specifically, condition (ii) is shown to imply condition (i), meaning that if condition (ii) is satisfied, condition (i) must also be satisfied. However, the converse is not necessarily true, indicating that the fulfillment of condition (i) does not guarantee condition (ii).


\begin{theorem}\label{Thm ave1i} 
	If either of the following conditions satisfy, then AVE (\ref{Equ2}) has exactly one solution:\\
	(i)   \cite{wu2018unique} $\vert \vert (A-I_{n})^{-1} \vert \vert_{2} < 2 $ or $\vert \vert (A+I_{n})^{-1} \vert \vert_{2} < 2$; \\
	(ii)  \cite{wu2016unique} If H is hermitian part of the positive definite matrix A and $H-I_{n}$ is invertible.
\end{theorem}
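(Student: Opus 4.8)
The plan is to derive both sufficient conditions from the regularity characterization of the associated interval matrix, exactly as in the preceding results. Recall from Theorem \ref{Thm ave1} that the AVE (\ref{Equ2}) has a unique solution whenever the interval matrix $[A-I_n,A+I_n]$ is regular, and that by parts (iv)--(v), together with the equivalence noted in the following remark, regularity is the same as invertibility of every matrix $A-I_n+2D$ (equivalently $A+I_n-2D$) as $D$ ranges over the diagonal matrices in $[0,I_n]$. I would reduce each of (i) and (ii) to one of these equivalent statements: (i) through a Neumann-series perturbation estimate anchored at an endpoint matrix, and (ii) through a positive-definiteness argument on the symmetric part of the endpoint matrices.

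For part (i), I would first observe that the norm hypothesis already forces $A-I_n$ to be invertible, so I may factor $A-I_n+2D=(A-I_n)\bigl(I_n+2(A-I_n)^{-1}D\bigr)$ for every $D\in[0,I_n]$. Since $D$ is diagonal with entries in $[0,1]$ we have $\|D\|_2\le 1$, whence $\|2(A-I_n)^{-1}D\|_2\le 2\|(A-I_n)^{-1}\|_2\|D\|_2$. Once this quantity is driven below $1$, the second factor is invertible by the Neumann series, so $A-I_n+2D$ is invertible for all admissible $D$, and Theorem \ref{Thm ave1}(iv) delivers the unique solution. The companion hypothesis on $\|(A+I_n)^{-1}\|_2$ is handled identically, factoring $A+I_n-2D=(A+I_n)\bigl(I_n-2(A+I_n)^{-1}D\bigr)$ and invoking Theorem \ref{Thm ave1}(v) instead.

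For part (ii), I would work directly with regularity of $[A-I_n,A+I_n]$ and exploit the symmetric part. Writing $H=\tfrac12(A+A^T)$, every member of the interval has the form $A+E$ with $E$ a real diagonal matrix satisfying $-I_n\preceq E\preceq I_n$, and its symmetric part is $H+E$ since $E$ is symmetric. The elementary fact I would lean on is that a real matrix with positive definite symmetric part is nonsingular, because $x^T(A+E)x=x^T(H+E)x>0$ for $x\ne 0$. Hence it suffices to show $H+E\succ 0$ for every admissible $E$; as $E\succeq -I_n$, the extreme case is $E=-I_n$, i.e. $H-I_n\succ 0$. I would then combine the positive definiteness of $A$ (that is, $H\succ 0$) with the information on $H-I_n$ to secure $H-I_n\succ 0$, conclude that the whole interval consists of nonsingular matrices, and finish via Theorem \ref{Thm ave1}(iii).

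The main obstacle I anticipate is precisely the last step of part (ii): upgrading the hypothesis about $H-I_n$ to the uniform positive definiteness $H+E\succ 0$ across the entire interval. Positivity of $H$ alone does not control the eigenvalues of $H-I_n$, so the argument really needs $\lambda_{\min}(H)>1$, and reconciling this with the stated assumption is the heart of the matter. An alternative route that sidesteps the interval entirely is to prove that $F(x)=Ax-|x|$ is strongly monotone, using $(F(x)-F(y))^T(x-y)\ge(\lambda_{\min}(H)-1)\|x-y\|_2^2$ together with the Lipschitz bound $\||x|-|y|\|_2\le\|x-y\|_2$, and then to invoke a standard global-homeomorphism theorem for strongly monotone maps. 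For part (i) the only delicate point is the constant bookkeeping, ensuring the perturbation norm is genuinely below $1$ so that the Neumann series converges.
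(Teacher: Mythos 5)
You are attempting to prove a statement that this paper never proves --- it refutes it. Theorem \ref{Thm ave1i} is quoted from the literature precisely so that the paper can observe, in the sentence immediately following it, that its conditions are \emph{not correct}, with Examples \ref{Ex3} and \ref{Ex4} serving as counterexamples. For part (i): take $A$ to be the all-ones $2\times 2$ matrix; then $(A-I_n)^{-1}$ and $(A+I_n)^{-1}$ both have $2$-norm equal to $1<2$, yet the AVE (\ref{Equ2}) with $b=(-5,-4)^{T}$ has three solutions, $(-2,-1)$, $(-4,3)$ and $(6,-5)$. For part (ii): take $A=H=0.25$; then $A$ is positive definite and $H-I_n=-0.75$ is invertible, yet $0.25x-\vert x\vert =-5$ has the two solutions $x=20/3$ and $x=-4$. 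Consequently no correct proof of the theorem as stated can exist, and any argument claiming one must break somewhere.

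Your write-up breaks exactly at the two points you yourself flagged as delicate. In part (i), the constant bookkeeping does not close: from $\Vert (A-I_n)^{-1}\Vert_2 < 2$ and $\Vert D\Vert_2\le 1$ you obtain only $\Vert 2(A-I_n)^{-1}D\Vert_2 < 4$, which is useless for the Neumann series; for that argument to go through the hypothesis must be $\Vert (A-I_n)^{-1}\Vert_2 < \tfrac12$, which is the form under which the result is actually true in the literature, and the all-ones example above confirms that the \quo{$<2$} version is genuinely false rather than merely harder to prove. In part (ii), the \quo{main obstacle} you identified is not an obstacle but an impossibility: positive definiteness of $A$ (i.e.\ $H\succ 0$) together with invertibility of $H-I_n$ gives no lower bound of the form $\lambda_{\min}(H)>1$; the scalar case $H=0.25$ shows the implication you need simply fails, and your proposed strong-monotonicity alternative requires exactly the same unavailable bound $\lambda_{\min}(H)>1$. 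So neither gap can be patched: both halves of the theorem are false as stated, which is the paper's entire point in this section.
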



The conditions of the Theorem \ref{Thm ave1i} are not correct, see Exa. (\ref{Ex3}) and Exa. (\ref{Ex4}).
\begin{example} \label{Ex3} The conditions $\vert \vert (A+I_{n})^{-1} \vert \vert_{2} < 2$ and $\vert \vert (A-I_{n})^{-1} \vert \vert_{2} < 2$ are insufficient to determine the unique solvability of the AVE (\ref{Equ2}). Let consider 
	\begin{equation*}
		A =
		\begin{bmatrix}
			1 &  1\\
			1  &  1
		\end{bmatrix}.
	\end{equation*}	
	The AVE	(\ref{Equ2}) has three solutions, namely (-2,-1), (-4,3) and (6,-5). However, $\vert \vert (A+I_{n})^{-1} \vert \vert_{2} = 1 < 2$ and $\vert \vert (A-I_{n})^{-1} \vert \vert_{2} = 1 < 2.$
	
	
\end{example}

\begin{remark}
	If condition (i) of Theorem \ref{Thm ave1i} is satisfied, the solution of the AVE (\ref{Equ2}) can exhibit two possibilities:\\
	(i). Unique Solution: In some instances, the AVE (\ref{Equ2}) has a unique solution.\\
	(ii). Non-Unique Solution: However, in other instances, the AVE (\ref{Equ2}) does not have a unique solution.
	
	So here arise an open question. The open question is: What additional conditions on matrix A will determine whether the AVE (\ref{Equ2}) has a unique solution for each b or whether it does not have a unique solution for each b?
	
\end{remark}

\begin{example} \label{Ex4}
	For condition (ii) of the Theorem \ref{Thm ave1i}, consider the positive definite matrix A = 0.25, then H = 0.25. Clearly, $H-I_{n}$ is invertible. But AVE $ 0.25x - \vert x \vert = -5 $ has two solutions, namely $x = 6.66667$ and $x= -4$. 
\end{example}


\begin{proposition} \label{prop 1}
	The AVE (\ref{Equ2}) does not has a unique solution for all b if the interval matrix $[A-I_{n}, A+I_{n}]$ is singular.
\end{proposition}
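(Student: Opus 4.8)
The plan is to prove the contrapositive by connecting the singularity of the interval matrix to the failure of unique solvability, using the characterization already invoked earlier in the paper. Recall from the proof of Theorem \ref{Thm ave0} that the AVE (\ref{Equ2}) has a unique solution for each $b$ if and only if the interval matrix $[A-I_{n}, A+I_{n}]$ is regular (this is the result cited from \cite{wu2018unique,zhang2009global}). The statement I must prove is simply the logical negation of one direction of this equivalence: if $[A-I_{n}, A+I_{n}]$ is singular, then the AVE does not have a unique solution for all $b$.

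First I would state the equivalence explicitly: regularity of $[A-I_{n}, A+I_{n}]$ is equivalent to the AVE (\ref{Equ2}) having a unique solution for every $b$. This is exactly the characterization already used in the proof of Theorem \ref{Thm ave0}, so I may assume it. Then the argument is a one-line logical step: assume $[A-I_{n}, A+I_{n}]$ is singular. If, for contradiction, the AVE (\ref{Equ2}) had a unique solution for all $b$, then by the equivalence the interval matrix would have to be regular, contradicting the assumed singularity. Hence the AVE cannot have a unique solution for all $b$.

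Alternatively, and perhaps more constructively, I could exhibit the failure directly. Singularity means some matrix $A + D \in [A-I_{n}, A+I_{n}]$ with $D = \diag(d)$, $d \in [-1,1]^{n}$, is not invertible. One can then relate a singular such matrix to either nonexistence or nonuniqueness of solutions of the corresponding AVE by choosing an appropriate right-hand side, again invoking the standard correspondence between the solution set of the AVE and the matrices in the interval hull. I expect the cleanest route, however, to be the purely logical contrapositive, since the needed equivalence is already established in the excerpt.

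The main obstacle is essentially bookkeeping rather than mathematics: I must be careful that the cited equivalence is stated for the property \emph{unique solution for each $b$} (the universally quantified version), so that negating it yields precisely \emph{not a unique solution for all $b$}, matching the proposition's phrasing. As long as the quantifier over $b$ is handled consistently, the proof is immediate from the regularity characterization, and no delicate estimate or construction is required.
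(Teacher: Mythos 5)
Your proposal is correct and follows essentially the same route as the paper: the paper's own proof invokes Rohn's theorem of the alternatives from \cite{rohn2004theorem}, which is precisely the equivalence between regularity of $[A-I_{n}, A+I_{n}]$ and unique solvability of the AVE for every $b$, and then concludes by negation exactly as you do. Citing the equivalence via \cite{wu2018unique,zhang2009global} (as in Theorem \ref{Thm ave0}) instead of \cite{rohn2004theorem} is only a difference in referencing, not in the argument.
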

\begin{proof}
	Let  interval matrix $[A-I_{n}, A+I_{n}]$ is singular then by the alternative theorem of \cite{rohn2004theorem}, AVE (\ref{Equ2}) does not has a unique solution.
\end{proof}

\begin{proposition} \label{prop 2}
	The AVE (\ref{Equ2}) does not has a unique solution for all b if the following conditions are hold: \\
	(i) $\vert Ax \vert  \leq \vert x \vert$ has a non-trivial solution;\\
	(ii) $\sigma_{max}(A) \leq 1;$\\
	(iii) $I_{n}-A^{T}A$ is positive semi-definite;\\
	(iv) matrix $A$ has one eigenvalue is 0 or 1.
\end{proposition}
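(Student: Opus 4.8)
The plan is to collapse all four hypotheses to a single statement, namely that the interval matrix $[A-I_n,A+I_n]$ is singular; once this is in hand, Proposition \ref{prop 1} (which is just Rohn's alternative theorem \cite{rohn2004theorem} in contrapositive form) instantly gives that the AVE (\ref{Equ2}) is not uniquely solvable for all $b$. So the whole argument is organized around the chain ``hypothesis $\Rightarrow [A-I_n,A+I_n]$ singular $\Rightarrow$ no unique solution,'' and the only real content is the first implication for each of (i)--(iv).

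First I would record the governing equivalence. Since the radius matrix here is $\Delta=I_n$ and $I_n|x|=|x|$, the Oettli--Prager/Rohn singularity criterion reads: $[A-I_n,A+I_n]$ is singular if and only if $|Ax|\le|x|$ has a nontrivial solution, which is precisely condition (i). I only need the direction that produces singularity, and it is constructive: given $x\ne 0$ with $|Ax|\le|x|$, put $t_i=-(Ax)_i/x_i$ whenever $x_i\ne 0$ and $t_i\in[-1,1]$ arbitrary when $x_i=0$ (the hypothesis forces $(Ax)_i=0$ there). Then $T=\diag(t_1,\dots,t_n)$ has all entries in $[-1,1]$, so $A+T\in[A-I_n,A+I_n]$, and by construction $(A+T)x=0$; hence $A+T$ is singular. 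Thus (i) alone yields singularity.

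Next I would feed (iv) and the pair (ii)--(iii) into (i). If $A$ has eigenvalue $0$, an eigenvector gives $Ax=0$, so $|Ax|=0\le|x|$; if $A$ has eigenvalue $1$, then $Ax=x$, so $|Ax|=|x|\le|x|$; either way (i) holds and (iv) is done. For (iii), note that $I_n-A^{T}A$ is positive semidefinite exactly when $x^{T}x\ge x^{T}A^{T}Ax=\|Ax\|_2^2$ for all $x$, i.e. exactly when $\sigma_{max}(A)\le 1$; hence (ii) and (iii) are literally the same hypothesis, and it suffices to treat (ii).

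The substance of the proof, and the step I expect to be the main obstacle, is showing that $\sigma_{max}(A)\le 1$ forces $[A-I_n,A+I_n]$ to be singular. The difficulty is that $\sigma_{max}(A)\le 1$ is only the $2$-norm bound $\|Ax\|_2\le\|x\|_2$, which is strictly weaker than the componentwise inequality $|Ax|\le|x|$ that (i) demands, so one cannot simply read a solution off the norm estimate. My plan is to argue through the spectrum: every eigenvalue lies in the closed unit disk because the spectral radius is at most $\sigma_{max}(A)\le 1$. If $A$ has a real eigenvalue $\lambda\in[-1,1]$, its eigenvector satisfies $|Ax|=|\lambda|\,|x|\le|x|$ and (i) holds outright. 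The genuinely hard subcase is when every eigenvalue is non-real; there I would pass to the vertex determinants $\det(A+\diag(s))$, $s\in\{-1,1\}^n$, use that $\det(A+I_n)=\prod_i(1+\lambda_i)\ge 0$ (real eigenvalues contribute nonnegative factors, complex pairs contribute $|1+\lambda|^2$) together with the constraint $|\det A|=\prod_i|\lambda_i|\le 1$ coming from $\sigma_{max}(A)\le 1$, and then invoke the intermediate value theorem along a path inside the convex interval matrix to locate a member with vanishing determinant. Making this sign/continuity argument airtight in the all-complex case --- rather than the one-line eigenvector argument available in every other case --- is where the real work lies, and it is the place where a careful reader should check that no hidden gap remains.
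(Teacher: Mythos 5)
Your overall architecture is the same as the paper's: collapse each of (i)--(iv) to singularity of the interval matrix $[A-I_n,A+I_n]$ and then invoke Proposition \ref{prop 1}. Your handling of (i) (explicit construction of the singular member $A+T$ from a nontrivial solution of $|Ax|\le|x|$), of (iv) (eigenvector argument), and your observation that (iii) is equivalent to (ii) via $x^Tx \ge \|Ax\|_2^2$ are all correct, and for (i) and (iv) you are actually more self-contained than the paper, which simply cites \cite{rex1998sufficient} and notes that $A$ or $A-I_n$ lies in the interval matrix.

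However, the gap you flagged is genuine and is not closable by the strategy you sketch. You never prove the crucial implication that $\sigma_{max}(A)\le 1$ forces $[A-I_n,A+I_n]$ to be singular, and the determinant-sign/intermediate-value plan for the all-complex-eigenvalue case fails outright: an IVT argument needs two members of the interval matrix whose determinants have strictly opposite signs, and no such pair need exist. Concretely, take the rotation matrix $A=\begin{pmatrix}0 & -1\\ 1 & 0\end{pmatrix}$, with eigenvalues $\pm i$, $\sigma_{max}(A)=1$, $\det A=1$, $\det(A+I_2)=\det(A-I_2)=2$. Every member of $[A-I_2,A+I_2]$ has the form $A+\diag(d_1,d_2)$ with $|d_i|\le 1$ and determinant $1+d_1d_2\in[0,2]$, so the determinant is never negative anywhere on the interval matrix; the singular members (where $d_1d_2=-1$) sit at the minimum value $0$ of the determinant, not across a sign change, so no path-plus-IVT argument can locate them, and the invariants you propose to track ($\det(A+I_n)\ge 0$, $\det(I_n-A)\ge 0$, $|\det A|\le 1$) are all consistent with this everywhere-nonnegative behavior. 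The missing step is precisely Theorem 4.2 of \cite{rex1998sufficient}, which asserts that $\sigma_{max}(A_c)\le\sigma_{min}(\Delta)$ implies singularity of $[A_c-\Delta,A_c+\Delta]$ (here $\Delta=I_n$, $\sigma_{min}(I_n)=1$); that is exactly what the paper cites, and its proof requires genuinely different ideas than eigenvalue and determinant bookkeeping. To make your proof complete you must either cite that theorem for (ii) (and hence (iii)), or reproduce its proof.
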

\begin{proof}
	According to Theorem 2.1, Theorem 4.2 and Theorem 5.2 of \cite{rex1998sufficient}, the interval matrix $[A-I_{n}, A+I_{n}]$ is singular if one of the any conditions (i - iii) of the Proposition \ref{prop 2} holds. \\
	If matrix A has one eigenvalue 0 or 1, then $A$ or $A-I_{n}$ will be singular. So the interval matrix $[A-I_{n}, A+I_{n}]$ is singular.
	Then by the Proposition \ref{prop 1}~, AVE (\ref{Equ2}) does not have a unique solution for all b.
\end{proof}

\section{Conclusions}
In this paper, based on the matrices classes, we discussed the unique solvability of the GAVE and AVE. Some previous conditions are not appropriate to check the unique solvability of the GAVE and AVE, and these conditions need further investigation. Moreover, we show that unique solvability conditions are also applicable to check the unique solvability of the GAVME. Given the established equivalence between the  AVE and the linear complementarity problem (LCP) and between the GAVE and the horizontal LCP, the outcomes regarding the unique solvability presented in this paper carry significant implications for both LCP and horizontal LCP.


\paragraph{Acknowledgments.} 
The research work of Shubham Kumar was supported by the Ministry of Education, Government of India, through Graduate Aptitude Test in Engineering (GATE) fellowship registration No. MA19S43033021.



\bibliographystyle{abbrv}
\bibliography{gave_solv}

\newcommand{\SortNoop}[1]{}
\begin{thebibliography}{10}

\bibitem{achache2021unique}
M.~Achache and N.~Anane.
\newblock On unique solvability and picard’s iterative method for absolute
  value equations.
\newblock {\em Bulletin of the Transilvania University of Brasov. Series III:
  Mathematics and Computer Science}, pages 13--26, 2021.

\bibitem{ali2022iterative}
R.~Ali, A.~Ali, and S.~Iqbal.
\newblock Iterative methods for solving absolute value equations.
\newblock {\em J. Math. Comput. Sci}, 26(4):322--329, 2022.

\bibitem{dehghan2020matrix}
M.~Dehghan and A.~Shirilord.
\newblock Matrix multisplitting picard-iterative method for solving generalized
  absolute value matrix equation.
\newblock {\em Applied Numerical Mathematics}, 158:425--438, 2020.

\bibitem{farhadsefat2012note}
R.~Farhadsefat, T.~Lotfi, and J.~Rohn.
\newblock A note on regularity and positive definiteness of interval matrices.
\newblock {\em Open Mathematics}, 10(1):322--328, 2012.

\bibitem{hladik2023properties}
M.~Hlad{\'\i}k.
\newblock Properties of the solution set of absolute value equations and the
  related matrix classes.
\newblock {\em SIAM Journal on Matrix Analysis and Applications},
  44(1):175--195, 2023.

\bibitem{horn2012matrix}
R.~A. Horn and C.~R. Johnson.
\newblock {\em Matrix analysis}.
\newblock Cambridge university press, 2012.

\bibitem{hu1982estimates}
J.~Hu.
\newblock Estimates of  $\vert \vert B^{-1}A \vert \vert_{\infty}$  and their applications.
\newblock {\em Math. Numer. Sin}, 4(3):272--282, 1982.

\bibitem{kumar2023note}
S.~Kumar and Deepmala.
\newblock A note on unique solvability of the generalized absolute value matrix
  equation.
\newblock {\em National Academy Science Letters}, 46(2):129--131, 2023.

\bibitem{kumar2022note}
S.~Kumar et~al.
\newblock A note on the unique solvability condition for generalized absolute
  value matrix equation.
\newblock {\em Journal of Numerical Analysis and Approximation Theory},
  51(1):83--87, 2022.

\bibitem{li2016note}
C.-X. Li and S.-L. Wu.
\newblock A note on the unique solution of linear complementarity problem.
\newblock {\em Cogent Mathematics}, 3(1):1271268, 2016.

\bibitem{lotfi2013note}
T.~Lotfi and H.~Veiseh.
\newblock A note on unique solvability of the absolute value equation.
\newblock 2013.

\bibitem{mangasarian2007absolute}
O.~Mangasarian.
\newblock Absolute value programming.
\newblock {\em Computational optimization and applications}, 36:43--53, 2007.

\bibitem{mangasarian2006absolute}
O.~Mangasarian and R.~Meyer.
\newblock Absolute value equations.
\newblock {\em Linear Algebra and Its Applications}, 419(2-3):359--367, 2006.

\bibitem{poljak1993checking}
S.~Poljak and J.~Rohn.
\newblock Checking robust nonsingularity is np-hard.
\newblock {\em Mathematics of Control, Signals and Systems}, 6:1--9, 1993.

\bibitem{prokopyev2009equivalent}
O.~Prokopyev.
\newblock On equivalent reformulations for absolute value equations.
\newblock {\em Computational Optimization and Applications}, 44:363--372, 2009.

\bibitem{rex1998sufficient}
G.~Rex and J.~Rohn.
\newblock Sufficient conditions for regularity and singularity of interval
  matrices.
\newblock {\em SIAM Journal on Matrix Analysis and Applications},
  20(2):437--445, 1998.

\bibitem{rohn1994positive}
J.~Rohn.
\newblock Positive definiteness and stability of interval matrices.
\newblock {\em SIAM Journal on Matrix Analysis and Applications},
  15(1):175--184, 1994.

\bibitem{rohn2004theorem}
J.~Rohn.
\newblock A theorem of the alternatives for the equation $Ax+ B| x|= b.$
\newblock {\em Linear and Multilinear Algebra}, 52(6):421--426, 2004.

\bibitem{rohn2009unique}
J.~Rohn.
\newblock On unique solvability of the absolute value equation.
\newblock {\em Optimization Letters}, 3(4):603--606, 2009.

\bibitem{rohn2014iterative}
J.~Rohn, V.~Hooshyarbakhsh, and R.~Farhadsefat.
\newblock An iterative method for solving absolute value equations and
  sufficient conditions for unique solvability.
\newblock {\em Optimization Letters}, 8:35--44, 2014.

\bibitem{wu2021unique}
S.~Wu and S.~Shen.
\newblock On the unique solution of the generalized absolute value equation.
\newblock {\em Optimization Letters}, 15:2017--2024, 2021.

\bibitem{wu2016unique}
S.-L. Wu and P.~Guo.
\newblock On the unique solvability of the absolute value equation.
\newblock {\em Journal of Optimization Theory and Applications}, 169:705--712,
  2016.

\bibitem{wu2018unique}
S.-L. Wu and C.-X. Li.
\newblock The unique solution of the absolute value equations.
\newblock {\em Applied Mathematics Letters}, 76:195--200, 2018.

\bibitem{wu2020note}
S.-L. Wu and C.-X. Li.
\newblock A note on unique solvability of the absolute value equation.
\newblock {\em Optimization Letters}, 14(7):1957--1960, 2020.

\bibitem{xie2021unique}
K.~Xie.
\newblock On the unique solvability of the generalized absolute value matrix
  equation.
\newblock {\em American Journal of Applied Mathematics}, 9(4):104, 2021.

\bibitem{zhang2009global}
C.~Zhang and Q.~Wei.
\newblock Global and finite convergence of a generalized newton method for
  absolute value equations.
\newblock {\em Journal of Optimization Theory and Applications},
  143(2):391--403, 2009.

\end{thebibliography}

\end{document}